\theoremstyle{definition}
\newtheorem*{question}{Question}
\theoremstyle{plain}
\newtheorem{theorem}{Theorem}[section]
\newtheorem{lemma}[theorem]{Lemma}
\newtheorem{corollary}[theorem]{Corollary}
\newtheorem{bigbigremark}[theorem]{Remark}
\theoremstyle{definition}
\newtheorem{definition}[theorem]{Definition}
\newtheorem{parag}[theorem]{}
\newtheorem{example}[theorem]{Example}
\newtheorem{notation}[theorem]{Notation}
\newtheorem{notations}[theorem]{Notations}
\newtheorem{bigremarks}[theorem]{Remarks}
\theoremstyle{remark}
\newtheorem*{remark}{Remark}
\noindent\ref{#1}. {\bf #2.} \it}{\vspace{2mm}}
\newcommand{\Aut}{	\operatorname{{\rm Aut}}}
\newcommand{\Spec}{	\operatorname{\text{\rm Spec}}}
\newcommand{\dom}{	\operatorname{\text{\rm dom}}}
\newcommand{\image}{	\operatorname{\text{\rm im}}}
\newcommand{\trdeg}{	\operatorname{\text{\rm trdeg}}}
\newcommand{\Frac}{	\operatorname{\text{\rm Frac}}}
\newcommand{\Der}{	\operatorname{\text{\rm Der}}}
\newcommand{\lnd}{	\operatorname{\text{\rm LND}}}
\newcommand{\ML}{	\operatorname{\text{\rm ML}}}
\newcommand{\FML}{	\operatorname{\text{\rm FML}}}
\newcommand{\PE}{	\operatorname{\text{\rm PE}}}
\newcommand{\setspec}[2]{\big\{\,#1\, \mid \,#2\, \big\}}
\newcommand{\Nat}{\ensuremath{\mathbb{N}}}
\newcommand{\Rat}{\ensuremath{\mathbb{Q}}}
\newcommand{\Comp}{\ensuremath{\mathbb{C}}}
\newcommand{\Reals}{\ensuremath{\mathbb{R}}}
\newcommand{\aff}{\ensuremath{\mathbb{A}}}
\newcommand{\bk}{{\ensuremath{\rm \bf k}}}
\newcommand{\ck}{{\ensuremath{\bar\bk}}}
\newcommand{\kk}[1]{\bk^{[#1]}}
\newcommand{\pgoth}{{\ensuremath{\mathfrak{p}}}}
\newcommand{\qgoth}{{\ensuremath{\mathfrak{q}}}}
\newcommand{\mgoth}{{\ensuremath{\mathfrak{m}}}}
\newcommand{\Eeul}{\EuScript{E}}
\newcommand{\Seul}{\EuScript{S}}
\newcommand{\Xeul}{\EuScript{X}}
\newcommand{\isom}{\cong}
\renewcommand{\epsilon}{\varepsilon}
\renewcommand{\phi}{\varphi}
\renewcommand{\emptyset}{\varnothing}
\newenvironment{enumerata}%
{\begin{enumerate}

}{\end{enumerate}}
\newcommand{\rien}[1]{}
\begin{document}
\renewcommand{\baselinestretch}{1.07}


\title{Rings with trivial FML-invariant}

\author{Daniel Daigle}

\address{Department of Mathematics and Statistics\\
	University of Ottawa\\
	Ottawa, Canada\ \ K1N 6N5}

\email{ddaigle@uottawa.ca}

\thanks{Research supported by a grant from NSERC Canada.}

\keywords{Locally nilpotent derivation, unirational variety, affine variety, affine space.}

{\renewcommand{\thefootnote}{}
\footnotetext{2010 \textit{Mathematics Subject Classification.}
Primary: 14R10. Secondary: 14R20, 14M20, 14R05.}}

\begin{abstract} 
Let $\bk$ be a field of characteristic zero and $B$ a commutative integral domain that is also a finitely generated $\bk$-algebra.
It is well known that if $\bk$ is algebraically closed and the ``Field Makar-Limanov'' invariant $\FML(B)$ is equal to $\bk$,
then $B$ is unirational over $\bk$.
This article shows that, when $\bk$ is not assumed to be algebraically closed,
the condition $\FML(B)=\bk$ implies that there exists a nonempty Zariski-open subset $U$ of $\Spec B$ with the following property:
for each prime ideal $\pgoth \in U$, the $\kappa(\pgoth)$-algebra $\kappa(\pgoth) \otimes_\bk B$ can be embedded in a polynomial ring
in $n$ variables over $\kappa(\pgoth)$, where $n=\dim B$ and $\kappa(\pgoth) = B_{\pgoth}/{\pgoth}B_{\pgoth}$.
\end{abstract}
\maketitle
  
\vfuzz=2pt


\newcommand{\Ascr}{\mathscr{A}}
\newcommand{\Dscr}{\mathscr{D}}
\newcommand{\Kscr}{\mathscr{K}}

\newcommand{\Q}[1]{\mbox{$\mathbf{S}({#1})$}}
\newcommand{\bQ}[1]{\mbox{$\mathbf{\bar S}({#1})$}}

\section*{Introduction}

In this article, the word \textit{ring} means commutative ring with a unity.
By a \textit{domain}, we mean a commutative integral domain.
If $A$ is a domain then $\Frac(A)$ is its field of fractions.
If $\bk$ is a field, then a \textit{$\bk$-domain} is a domain that is also a $\bk$-algebra;
by an \textit{affine $\bk$-domain} we mean a $\bk$-domain that is finitely generated as a $\bk$-algebra.

If $B$ is a ring, a derivation $D: B \to B$ is \textit{locally nilpotent} if for each $x\in B$ there exists $n \in \Nat$ such that $D^n(x)=0$.
The set of locally nilpotent derivations $D : B \to B$ is denoted $\lnd(B)$.
One defines
$$
\ML(B) = \bigcap_{\mbox{\scriptsize $D \in \lnd(B)$}} \ker D 
\qquad \text{and} \qquad
\FML(B) = \bigcap_{\mbox{\scriptsize $D \in \lnd(B)$}} \Frac(\ker D) ,
$$
where in the second case $B$ is assumed to be a domain and the intersection is taken in $\Frac B$.
If $\bk$ is a field of characteristic zero and $B$ is a $\bk$-domain then $\bk \subseteq \ML(B) \subseteq \FML(B)$,
and if $\FML(B)=\bk$ then we say that \textit{$B$ has trivial $\FML$-invariant.}

Let $\bk$ be an algebraically closed field of characteristic zero and $B$ an affine $\bk$-domain.
It remained an open question for some time whether the condition $\ML(B)=\bk$ implied that $B$ is rational over $\bk$
(one says that $B$ is \textit{rational} over $\bk$ if the field extension $\Frac(B)/\bk$ is purely transcendental).
However, Liendo gave examples in \cite{Liendo_Rationality2010} (and so did Popov in \cite{Popov_Russellfest})
showing that the implication is false.
Liendo then conjectured that the stronger condition $\FML(B)=\bk$ would imply that $B$ is rational or at least unirational over $\bk$
(one says that $B$ is \textit{unirational} over $\bk$ if there exists a purely transcendental field extension $F/\bk$ of finite transcendence degree
such that $\bk \subseteq \Frac(B) \subseteq F$).
Then the following result was proved:
\begin{quote}
\textbf{Unirationality Theorem.} \it 
Let $\bk$ be an algebraically closed field of characteristic zero and $B$ an affine $\bk$-domain satisfying $\FML(B)=\bk$.
Then $B$ is unirational over $\bk$.
\end{quote}
This statement follows from either one of \cite[Prop.\ 5.1]{Arz_Flen_Kaliman_Kutz_Zaid:FlexAut} or \cite[Thm 4]{Popov_InfiniteDimAlgGps2014}.
Moreover, examples are given in  \cite{Popov_RatFML2013} showing that $B$, in the above statement, is not necessarily rational over $\bk$.

This article investigates what becomes of the Unirationality Theorem when $\bk$ is not assumed to be algebraically closed.
It is certainly the case that the condition $\FML(B)=\bk$ implies that $B$ is {\it geometrically unirational,}
i.e.,  that $\ck \otimes_\bk B$ is unirational over $\ck$, where $\ck$ denotes the algebraic closure of $\bk$
(this follows from the Unirationality Theorem and some straightforward technique, see Cor.\ \ref{9132g8rf7rhf}).
The aim of this article is to show that  $\FML(B)=\bk$ implies that $B$ satisfies a condition stronger than geometric unirationality.
Before describing this result, let us make a few remarks about Sections \ref{Section:embeddingsinpolynomialalgebras} and \ref{Secrion:TheposetsAscrBandKscrB}.

Given a field $\bk$ and an affine $\bk$-domain $B$, 
let $\Xeul_\bk(B)$  be the set of prime ideals $\pgoth$ of $B$ such that the $\kappa(\pgoth)$-algebra
$\kappa(\pgoth) \otimes_\bk B$ can be embedded in a polynomial ring in finitely many variables over $\kappa(\pgoth)$,
where we write $\kappa(\pgoth) = B_\pgoth / \pgoth B_\pgoth$ for each $\pgoth \in \Spec B$.
It is interesting to consider the class of affine $\bk$-domains $B$ satisfying the condition that $\Xeul_\bk(B)$ has nonempty interior
(i.e., some nonempty open subset of $\Spec B$ is included in $\Xeul_\bk(B)$).
Ex.\ \ref{pc09vnE230ed9vCf} shows that it is possible for $\Xeul_\bk(B)$ to be dense in $\Spec B$ and to have empty interior,
so the condition ``$\Xeul_\bk(B)$ has nonempty interior'' is strictly stronger than $\Xeul_\bk(B)$ being dense in $\Spec B$.
Although this implies that $\Xeul_\bk(B)$ is not always a constructible subset of $\Spec B$,
the main result of Section \ref{Section:embeddingsinpolynomialalgebras} (Thm \ref{cp0Q9vn23we9dfcwend0}) 
asserts that certain sets closely related to $\Xeul_\bk(B)$ are constructible.
Cor.\ \ref{pc0vW2n3ef0qK2Jfij0x03rh} asserts that an affine $\bk$-domain $B$ satisfies the condition ``$\Xeul_\bk(B)$ has nonempty interior''
if and only if the $\Frac(B)$-algebra $\Frac(B) \otimes_\bk B$ can be embedded in a polynomial ring $(\Frac B)[X_1,\dots,X_n]$ for some $n$.

Section \ref{Secrion:TheposetsAscrBandKscrB}
recalls (from \cite{Daigle:StructureRings}) some properties of the invariant $\Kscr(B)$ of the ring $B$. 
These facts are needed in Section \ref{RingshavingtrivialFMLinvariant}.

The main result of this paper (Thm \ref{9i3oerXfvdf93p04efeJ}) states that
{\it if $\bk$ is a field of characteristic zero and $B$ is an affine $\bk$-domain
satisfying  $\FML(B)=\bk$ then $\Xeul_\bk(B)$ has nonempty interior.}
Our proof makes use of Thm \ref{cp0Q9vn23we9dfcwend0} and of some results from \cite{Popov_InfiniteDimAlgGps2014}.
In the special case where $\bk$ is algebraically closed,
our result states that {\it if  $\FML(B)=\bk$ then $B$ can be embedded in a polynomial ring over $\bk$},
which is stronger than the statement that $B$ is unirational over $\bk$.

\medskip
To the notations and conventions already introduced in the above text,
we add the following.
We write $\subseteq$ for inclusion, $\subset$ for strict inclusion, and $\setminus$ for set difference.
We adopt the convention that $0 \in \Nat$.
If $A$ is a ring and $n \in \Nat$, $A^{[n]}$ denotes a polynomial ring in $n$ variables over $A$;
if $\bk$ is a field, $\bk^{(n)}$ denotes the field of fractions of $\kk n$.
We write $\trdeg_K(L)$ or $\trdeg(L:K)$ for the transcendence degree of a field extension $L/K$.
If $A \subseteq B$ are domains, the transcendence degree of $B$ over $A$ is defined to be that of $\Frac B$ over $\Frac A$,
and is denoted $\trdeg_A(B)$ or $\trdeg(B:A)$.
If $A$ is a ring then $A^*$ is its group of units, $\dim A$ is the Krull dimension of $A$ and
if $a \in A$ then $A_a = S^{-1}A$ where $S = \{1,a,a^2,\dots\}$.


\section{Embeddings in polynomial algebras}
\label{Section:embeddingsinpolynomialalgebras}

Throughout this section, $\bk$ is an arbitrary field.

Given a $\bk$-algebra $B$ and $n \in \Nat$,  we write $B \subseteq \kk n$ as an abbreviation for the sentence:
{\it there exists an injective homomorphism of $\bk$-algebras from $B$ to a polynomial algebra in $n$ variables over $\bk$.}

It follows that if $B$ is a $\bk$-algebra, $K$ an extension field of $\bk$ and $n \in \Nat$, 
the notation $K \otimes_\bk B \subseteq K^{[n]}$ means:
{\it  there exists an injective homomorphism of $K$-algebras from $K \otimes_\bk B$ to a polynomial algebra in $n$ variables over $K$.}

\begin{notation}
Given an algebra $B$ over a field $\bk$,
we write $\PE_\bk(B)$ for the class of all field extensions $K/\bk$ satisfying
$$
\text{$K \otimes_\bk B \subseteq K^{[n]}$ for some $n \in \Nat$.}
$$
\end{notation}

The notation ``PE'' stands for ``polynomial embedding'' (in the sense of ``embedding in a polynomial ring'').
Although $\PE_\bk(B)$ is not necessarily a set,\footnote{For instance if $B \subseteq \kk n$ then $\PE_\bk(B)$ is the class of \textit{all} field
extensions $K/\bk$, which is not a set in the sense of the ZFC axiomatization.}
 there is no harm in using set notations such as
``$K/\bk \in \PE_\bk(B)$'' or ``$\PE_\bk(B) \neq \emptyset$'' (whose meanings are obvious).
It follows from part (c) of the following fact 
that if $B$ is finitely generated then $\PE_\bk(B)$ is the class of extensions $K/\bk$ satisfying $K \otimes_\bk B \subseteq K^{[ \dim B ]}$.

\begin{lemma} \label {0si1heazd93g9w8las0}
Let $\bk$ be a field, $B$ a finitely generated $\bk$-algebra and $K$ an extension field of $\bk$.
\begin{enumerata}

\item $K \otimes_\bk B$ is a finitely generated $K$-algebra and $\dim( K \otimes_\bk B ) = \dim B$.

\item If $n \in \Nat$ is such that $K \otimes_\bk B \subseteq K^{[n]}$, then $n \ge \dim B$.

\item If there exists $n \in \Nat$ such that $K \otimes_\bk B \subseteq K^{[n]}$, then  $K \otimes_\bk B \subseteq K^{[ \dim B ]}$.

\end{enumerata}
\end{lemma}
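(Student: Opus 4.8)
The plan is to establish the three parts in the order stated, since (b) uses (a) and (c) uses both.

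For part (a), I would invoke Noether normalization for the affine $\bk$-algebra $B$: there exist $t_1,\dots,t_d\in B$, algebraically independent over $\bk$, such that $B$ is module-finite over $\bk[t_1,\dots,t_d]$ and $d=\dim B$. Since $K$ is flat over $\bk$, tensoring the inclusion $\bk[t_1,\dots,t_d]\hookrightarrow B$ with $K$ gives an inclusion $K[t_1,\dots,t_d]=K\otimes_\bk\bk[t_1,\dots,t_d]\hookrightarrow K\otimes_\bk B$, and a finite set of $\bk[t]$-module generators of $B$ becomes a set of $K[t]$-module generators of $K\otimes_\bk B$. Thus $K\otimes_\bk B$ is a finitely generated $K$-algebra that is integral over the polynomial ring $K[t_1,\dots,t_d]$. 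As integral extensions preserve Krull dimension, $\dim(K\otimes_\bk B)=\dim K[t_1,\dots,t_d]=d=\dim B$.

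For part (b), suppose $K\otimes_\bk B\subseteq K^{[n]}$. Since $K^{[n]}$ is a domain, so is its subring $K\otimes_\bk B$; being a finitely generated $K$-domain, its dimension equals $\trdeg_K\Frac(K\otimes_\bk B)$. The embedding induces $\Frac(K\otimes_\bk B)\subseteq\Frac(K^{[n]})=K(X_1,\dots,X_n)$, so $\trdeg_K\Frac(K\otimes_\bk B)\le n$. Combining this with part (a) gives $\dim B=\dim(K\otimes_\bk B)\le n$.

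For part (c), write $A=K\otimes_\bk B$ and $d=\dim B=\dim A$; by hypothesis $A\subseteq K[X_1,\dots,X_n]$ with $n\ge d$ (by (b)), and $A$ is a domain. By Noether normalization choose $t_1,\dots,t_d\in A$, algebraically independent over $K$, with $A$ integral over $P:=K[t_1,\dots,t_d]$. The key reduction is: \emph{it suffices to produce one $K$-algebra homomorphism $\Phi\colon K[X_1,\dots,X_n]\to K[Y_1,\dots,Y_d]$ for which $\Phi(t_1),\dots,\Phi(t_d)$ are algebraically independent over $K$.} Indeed, algebraic independence makes $\Phi|_P\colon P\to K[Y_1,\dots,Y_d]$ injective, so the prime $\ker(\Phi|_A)$ of $A$ satisfies $\ker(\Phi|_A)\cap P=(0)$. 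Since $A$ is integral over $P$ and is a domain, both $(0)$ and $\ker(\Phi|_A)$ lie over $(0)$ in $P$; by incomparability they coincide, so $\Phi|_A$ is injective. Its image lies in $K^{[d]}$, yielding $A\subseteq K^{[\dim B]}$, as desired.

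What remains — and what I expect to be the main obstacle — is constructing such a $\Phi$. Writing $\tau=(t_1,\dots,t_d)\colon\aff^n\to\aff^d$ (dominant, as the $t_i$ are algebraically independent) and $\Phi$ via $X_i\mapsto q_i(Y_1,\dots,Y_d)$, the requirement is a polynomial map $q\colon\aff^d\to\aff^n$ with $\tau\circ q\colon\aff^d\to\aff^d$ dominant. When $K$ is infinite this is routine: taking $q$ to be a general affine-linear map (a general $d$-plane in $\aff^n$), the locus of coefficients where the restricted functions $t_1\circ q,\dots,t_d\circ q$ become algebraically dependent is a proper Zariski-closed condition (its complement is nonempty, as one checks over $\bar K$), and over an infinite field a nonempty open subset of affine space has a $K$-rational point. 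The delicate case is a finite field $K$ (or any field with too few rational points), where a nonempty open subset of affine space may contain no $K$-point, so a linear section need not exist; I would handle this by enlarging the search to polynomial maps $q$ of sufficiently high degree. Since every field $K$ arising in the applications of this lemma is infinite (residue fields and fraction fields in characteristic zero), the infinite-field argument already covers the cases actually needed later.
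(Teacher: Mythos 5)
Your part (a) is the paper's argument verbatim (Noether normalization plus flat base change), and your part (b) is a correct elementary substitute. For (c), however, the paper does not prove anything: it simply invokes Lemma B of \cite{Eakin72} for both (b) and (c), that lemma being precisely the statement that a finitely generated subalgebra of $K[X_1,\dots,X_n]$ of transcendence degree $d$ over $K$ embeds into $K^{[d]}$. Your reduction of (c) to producing a single $K$-homomorphism $\Phi\colon K[X_1,\dots,X_n]\to K[Y_1,\dots,Y_d]$ with $\Phi(t_1),\dots,\Phi(t_d)$ algebraically independent is correct and cleanly argued (the incomparability step is fine), but that reduction is the easy half; the construction of $\Phi$ is the entire content of Eakin's lemma, and it is exactly the step you leave unproven.

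Concretely, two things are missing. First, even over an infinite $K$, the claim that ``the locus of coefficients where $t_1\circ q,\dots,t_d\circ q$ become algebraically dependent is a proper Zariski-closed condition'' is not justified: algebraic dependence is an existential condition (existence of a nonzero relation of \emph{some} degree), so without an a priori degree bound it is not visibly closed; and the nonemptiness of the complement over $\bar K$ --- i.e.\ the existence of even one affine $d$-plane on which $\tau=(t_1,\dots,t_d)$ restricts to a dominant map --- is the real geometric content and is dismissed with ``as one checks over $\bar K$.'' Second, the lemma is stated for an arbitrary field $\bk$ and an arbitrary extension $K$, so $K$ may be finite; your proposal explicitly defers that case to unspecified ``polynomial maps of sufficiently high degree,'' and the observation that later applications only involve infinite fields does not prove the lemma as stated. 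The gap closes at once if you do what the paper does and cite Eakin's Lemma B for parts (b) and (c); otherwise the existence of $\Phi$ must actually be established, including over finite fields.
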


\begin{proof}
It is clear that $K \otimes_\bk B$ is finitely generated.
Let $d = \dim B$. By Noether's Normalization Lemma there exists an injective $\bk$-homomorphism $\kk d \to B$ which is also integral.
Applying the functor $K \otimes_\bk(\underline{\ \ })$ gives  an injective and integral $K$-homomorphism from $K \otimes_\bk \kk d = K^{[d]}$
to $K \otimes_\bk B$, so  $\dim( K \otimes_\bk B ) = d$.
This proves (a). Assertions (b) and (c) follow from  Lemma B of \cite{Eakin72}.
\end{proof}

\begin{lemma} \label {XJ03rfwIe0d1Z20wdskfUw9e8X}
Let $B$ be an algebra over a field $\bk$ and suppose that $K/\bk \in \PE_\bk(B)$.
\begin{enumerata}

\item Every overfield $L$ of $K$ satisfies $L/\bk \in \PE_\bk(B)$.

\item If $B$ is finitely generated as a $\bk$-algebra then
there exists a finitely generated field extension $K_0/\bk$ such that $\bk \subseteq K_0 \subseteq K$ and $K_0/\bk \in \PE_\bk(B)$.

\end{enumerata}
\end{lemma}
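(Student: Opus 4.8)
The plan is to reduce both parts to manipulations of a single witnessing embedding. By hypothesis there is an injective $K$-algebra homomorphism $\phi \colon K \otimes_\bk B \to K^{[n]}$ for some $n \in \Nat$, and everything will be extracted from $\phi$.

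For (a), I would simply base-change $\phi$ along the field extension $K \subseteq L$. Applying the functor $L \otimes_K (\underline{\ \ })$ and using the canonical isomorphisms $L \otimes_K (K \otimes_\bk B) \cong L \otimes_\bk B$ and $L \otimes_K K^{[n]} \cong L^{[n]}$, one obtains an $L$-algebra homomorphism $L \otimes_\bk B \to L^{[n]}$. The key point is that $L$ is free, hence flat, as a $K$-module, so $L \otimes_K(\underline{\ \ })$ is exact and therefore preserves the injectivity of $\phi$. This shows $L/\bk \in \PE_\bk(B)$, with the same $n$, and no finite generation of $B$ is needed here.

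For (b), I would descend $\phi$ to a finitely generated subfield of $K$. Write $B = \bk[b_1, \dots, b_m]$, so that $K \otimes_\bk B$ is generated as a $K$-algebra by the elements $1 \otimes b_i$. Each image $\phi(1 \otimes b_i) \in K^{[n]}$ is a polynomial involving only finitely many coefficients from $K$; let $S \subseteq K$ be the finite union of all these coefficients over $i = 1, \dots, m$, and set $K_0 = \bk(S)$, a finitely generated field extension of $\bk$ with $\bk \subseteq K_0 \subseteq K$. Since $B$ is free over $\bk$, the inclusion $K_0 \subseteq K$ induces an injection $K_0 \otimes_\bk B \hookrightarrow K \otimes_\bk B$ whose image is the $K_0$-subalgebra generated by the $1 \otimes b_i$, and $K_0^{[n]} \hookrightarrow K^{[n]}$ is likewise injective. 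By the choice of $S$, the map $\phi$ carries each generator $1\otimes b_i$ into $K_0^{[n]}$, hence carries all of $K_0 \otimes_\bk B$ into $K_0^{[n]}$, yielding a $K_0$-algebra homomorphism $\phi_0 \colon K_0 \otimes_\bk B \to K_0^{[n]}$ fitting into a commutative square with $\phi$ and the two inclusions.

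The one point that needs care — and the only place where anything could go wrong — is the injectivity of $\phi_0$. This follows from a diagram chase: the composite $K_0 \otimes_\bk B \hookrightarrow K \otimes_\bk B \xrightarrow{\phi} K^{[n]}$ is injective, being a composite of injections, and it equals the composite of $\phi_0$ with the inclusion $K_0^{[n]} \hookrightarrow K^{[n]}$; injectivity of the total composite forces injectivity of $\phi_0$. Hence $K_0 \otimes_\bk B \subseteq K_0^{[n]}$, i.e. $K_0/\bk \in \PE_\bk(B)$. I expect no serious obstacle in either part: (a) is flat base change, and (b) is a routine ``spreading out'' of finitely many coefficients combined with the injectivity descent just described.
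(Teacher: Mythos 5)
Your proposal is correct and follows essentially the same route as the paper: part (a) by flat base change along $K \subseteq L$, and part (b) by collecting the finitely many coefficients of the images of a generating set, setting $K_0 = \bk(S)$, and descending the embedding. The only difference is that you make explicit the injectivity of $K_0 \otimes_\bk B \to K \otimes_\bk B$ and the resulting injectivity of $\phi_0$, which the paper leaves implicit; this is a welcome clarification, not a deviation.
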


\begin{proof}
(a) For some $n \in \Nat$, there exists an injective $K$-homomorphism $K\otimes_\bk B \to K^{[ n ]}$.
Applying $L \otimes_K (\underline{\ \ })$ gives an injective $L$-homomorphism from 
$L \otimes_K (K\otimes_\bk B) = L \otimes_\bk B$ to  $L \otimes_K K^{[ n ]} = L^{[ n ]}$.

(b) For some $n \in \Nat$, there exists an injective $K$-homomorphism $\phi : K \otimes_\bk B \to K[X_1,\dots,X_n] = K^{[n]}$.
Choose $b_1, \dots, b_s \in B$ such that $B = \bk[b_1, \dots, b_s]$.
There exists a finite subset $S$ of $K$ that contains all coefficients of the polynomials
$\phi( 1 \otimes b_i ) \in K[X_1,\dots,X_n]$, $1 \le i \le s$. 
Define $K_0 = \bk(S)$, then the image of the composite
$K_0 \otimes_\bk B  \to K \otimes_\bk B \xrightarrow{\phi} K[X_1,\dots,X_n]$ is included in  $K_0[X_1,\dots,X_n]$,
so $K_0 \otimes_\bk B  \subseteq K_0^{[n]}$ and hence $K_0/\bk \in \PE_\bk(B)$.
\end{proof}

\begin{lemma} \label {p0cwosHyoFto693ud}
Let $B$ be an algebra over a field $\bk$.
If $\PE_\bk(B) \neq \emptyset$ then $B$ is geometrically integral, i.e., $K \otimes_\bk B$ is a domain for every extension field $K$ of $\bk$.
\end{lemma}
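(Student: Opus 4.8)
The plan is to leverage Lemma \ref{XJ03rfwIe0d1Z20wdskfUw9e8X}(a), which propagates membership in $\PE_\bk(B)$ to every overfield, together with the faithful flatness of field extensions. The hypothesis $\PE_\bk(B)\neq\emptyset$ furnishes one field $K_0/\bk$ with $K_0\otimes_\bk B\subseteq K_0^{[n]}$ for some $n$, and since a polynomial ring over a field is a domain, $K_0\otimes_\bk B$ is itself a domain. The point to overcome is that an arbitrary extension $K/\bk$ may bear no relation to $K_0$, so the embedding cannot be transported directly; the decisive move is to build a single field $\Omega$ that extends both $K$ and $K_0$ over $\bk$ simultaneously.

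First I would fix an arbitrary extension field $K/\bk$ and consider the $\bk$-algebra $K\otimes_\bk K_0$, which is nonzero as a tensor product of nonzero $\bk$-vector spaces. Choosing a maximal ideal $\mgoth$ of $K\otimes_\bk K_0$ and setting $\Omega=(K\otimes_\bk K_0)/\mgoth$, I obtain a field $\Omega$ equipped with two composite homomorphisms $K\to K\otimes_\bk K_0\to\Omega$ and $K_0\to K\otimes_\bk K_0\to\Omega$. Each is a ring homomorphism out of a field, hence injective, and both are compatible with the $\bk$-algebra structure by construction, so $\Omega$ is an extension field of both $K$ and $K_0$ over $\bk$.

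Next, since $\Omega$ is an overfield of $K_0$ and $K_0/\bk\in\PE_\bk(B)$, Lemma \ref{XJ03rfwIe0d1Z20wdskfUw9e8X}(a) gives $\Omega/\bk\in\PE_\bk(B)$; hence $\Omega\otimes_\bk B$ embeds in $\Omega^{[n]}$ and is therefore a domain. Finally I would descend this property to $K$: because $\Omega$ is a field extension of $K$, it is faithfully flat as a $K$-module, so the canonical map $K\otimes_\bk B\to\Omega\otimes_K(K\otimes_\bk B)=\Omega\otimes_\bk B$ is injective. Thus $K\otimes_\bk B$ is isomorphic to a subring of the domain $\Omega\otimes_\bk B$ and is a domain, and as $K$ was arbitrary, $B$ is geometrically integral.

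The only genuine obstacle is the bridging step, namely passing from the particular field $K_0$ supplied by the hypothesis to an arbitrary $K$ via the compositum $\Omega$; the remaining ingredients—injectivity of homomorphisms out of a field, faithful flatness of field extensions, and the domain property of polynomial rings—are routine. The sole computation needing a moment's care is the associativity identity $\Omega\otimes_K(K\otimes_\bk B)=\Omega\otimes_\bk B$ used in the descent, which is standard.
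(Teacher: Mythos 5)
Your proof is correct and follows essentially the same strategy as the paper's: embed $K$ and a field from $\PE_\bk(B)$ into a common overfield $\Omega$, use Lemma \ref{XJ03rfwIe0d1Z20wdskfUw9e8X}(a) to conclude $\Omega\otimes_\bk B$ is a domain, and descend to $K\otimes_\bk B$ by a flatness/injectivity argument. The only (cosmetic) difference is how the common overfield is produced: the paper takes an algebraically closed field $M\supseteq L$ with $\trdeg_\bk(M)\ge\trdeg_\bk(K)$ and invokes the existence of a $\bk$-embedding $K\to M$, whereas you quotient $K\otimes_\bk K_0$ by a maximal ideal, which neatly sidesteps any transcendence-degree bookkeeping.
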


\begin{proof}
Let $K$ be an extension field of $\bk$.
Choose an element $L/\bk$ of $\PE_\bk(B)$, and choose an algebraically closed field $M$ satisfying $L \subseteq M$ and $\trdeg_\bk(M) \ge \trdeg_\bk(K)$.
Then there exists a $\bk$-homomorphism $K \to M$, so the fact that $B$ is a flat $\bk$-module implies that  $K \otimes_\bk B$ is a subring of $M \otimes_\bk B$.
We have $M/\bk \in \PE_\bk(B)$ by Lemma \ref{XJ03rfwIe0d1Z20wdskfUw9e8X}, so $M \otimes_\bk B \subseteq M^{[ n] }$ for some $n$,
so $M \otimes_\bk B$ is a domain and hence $K \otimes_\bk B$ is a domain.
\end{proof}

\begin{lemma} \label {0cv9n2w0dZw0dI28efydldc0}
Consider a tensor product of rings
$$
\xymatrix@R=12pt{
S \ar[r] &   S \otimes_R T \\
R \ar[u] \ar[r]  &   T \ar[u]
}
$$
where all homomorphisms are injective.
\begin{enumerata}

\item Suppose that $S$ is a free $R$-module and that there exists a basis $\Eeul$ of $S$ over $R$ such that $1 \in \Eeul$.
Then $S \cap T = R$.

\item If $R,S,T$ and $S \otimes_R T$ are domains, and if
$(s_j)_{j \in J}$ is a family of elements of $S$ which is a transcendence basis of $\Frac S$ over $\Frac R$,
then $(s_j \otimes 1)_{j \in J}$ is a transcendence basis of $\Frac(S \otimes_R T)$ over $\Frac T$.
In particular, $\trdeg_T(S \otimes_R T) = \trdeg_R S$.

\end{enumerata}
\end{lemma}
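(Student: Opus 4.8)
For part (a), the plan is a direct coordinate computation. Since $\Eeul$ is an $R$-basis of $S$ with $1 \in \Eeul$, the family $\{\,e \otimes 1 : e \in \Eeul\,\}$ is a $T$-basis of the free $T$-module $S \otimes_R T$. In these coordinates, the image of $S$ is exactly the set of elements $\sum_{e} r_e\,(e \otimes 1)$ with all $r_e \in R$, while the image of $T$ is the set of elements $t\,(1 \otimes 1)$, i.e.\ those supported on the single basis vector $1 \otimes 1$ (this is where $1 \in \Eeul$ is used). By uniqueness of coordinates, an element lying in both subrings must be supported on $1 \otimes 1$ and have its coordinate there in $R$; such elements are precisely the image of $R$, so $S \cap T = R$.

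For part (b), the key step is to reduce to the case where the base ring is a field. Writing $K = \Frac R$, I would localize $S \otimes_R T$ at the multiplicative set $R \setminus \{0\}$, whose image consists of nonzero elements because $R \hookrightarrow S \otimes_R T$. As localization commutes with tensor products, this localization is identified with $S_K \otimes_K T_K$, where $S_K = K \otimes_R S$ and $T_K = K \otimes_R T$; being a localization of the domain $S \otimes_R T$, it is again a domain with fraction field $N := \Frac(S \otimes_R T)$. Here $S_K$ and $T_K$ are domains satisfying $\Frac(S_K) = \Frac(S)$ and $\Frac(T_K) = \Frac(T) =: M$, the elements $s_j \otimes 1$ are unchanged under these identifications, and $(s_j)$ remains a transcendence basis of $\Frac(S_K)$ over $K$. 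Thus it suffices to treat the case $R = K$.

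With $R = K$ a field, $S$ is a $K$-vector space and the argument splits into two halves. For algebraic independence of $(s_j \otimes 1)$ over $M$, a dependence relation over $T$ rewrites as $\sum_\alpha m_\alpha \otimes c_\alpha = 0$ with $c_\alpha \in T$ and the $m_\alpha$ the distinct monomials in the $s_j$; since the $s_j$ are algebraically independent over $K$, the $m_\alpha$ are $K$-linearly independent and extend to a $K$-basis of $S$, which makes $\{\,m_\alpha \otimes 1\,\}$ part of a $T$-basis of $S \otimes_K T$ and forces each $c_\alpha = 0$. For the spanning half I would show that each ring generator of $S \otimes_K T$ is algebraic over $F := M(s_j \otimes 1 : j \in J)$: the generators $1 \otimes t$ already lie in $M \subseteq F$, and for $s \otimes 1$ I would apply the injection $S \hookrightarrow S \otimes_K T$ to an algebraic dependence $\sum_k d_k s^k = 0$ of $s$ over $K[s_j]$ (valid already in $S$, with $d_n \neq 0$), obtaining $\sum_k (d_k \otimes 1)(s \otimes 1)^k = 0$ with coefficients in $K[s_j \otimes 1] \subseteq F$ and leading coefficient $d_n \otimes 1 \neq 0$ by injectivity. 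Hence $N$ is algebraic over $F$, so $(s_j \otimes 1)$ is a transcendence basis of $N/M$; counting then gives $\trdeg_T(S \otimes_R T) = |J| = \trdeg_R S$.

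The step I expect to be the main obstacle is the reduction in the second paragraph: one must verify carefully that localizing at $R \setminus \{0\}$ turns $S \otimes_R T$ into $S_K \otimes_K T_K$, that the result is a domain with fraction field $N$, and that $S_K$ and $T_K$ have the expected fraction fields. This reduction is exactly what makes the independence argument work, since only over the field $K$ are the $K$-linearly independent monomials $m_\alpha$ guaranteed to extend to a basis --- over a general domain $R$, where $S$ need not be free, that step would fail.
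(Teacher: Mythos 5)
The paper offers no proof to compare against: Lemma \ref{0cv9n2w0dZw0dI28efydldc0} is stated with ``Exercise left to the reader.'' Your solution of the exercise is correct. In (a), the coordinate argument in the free $T$-module $S \otimes_R T$ with basis $\{\,e \otimes 1 : e \in \Eeul\,\}$ is exactly the intended use of the hypothesis $1 \in \Eeul$, and the injectivity of $R \to T$ is what lets you pull the coordinates back to $R$. In (b), the reduction to $R = K = \Frac R$ by localizing at $R \setminus \{0\}$ is the right move and you justify the needed points (the image of $R \setminus \{0\}$ avoids $0$ because $R$ injects into the domain $S \otimes_R T$, so the localization is a domain with the same fraction field, and $\Frac(S_K) = \Frac S$, $\Frac(T_K) = \Frac T$); over the field $K$ the independence half correctly exploits that the monomials $m_\alpha$ in the $s_j$ are $K$-linearly independent and hence part of a $T$-basis of the free $T$-module $S \otimes_K T$, and the algebraicity half correctly reduces to the ring generators $s \otimes 1$ and $1 \otimes t$. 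No gaps.
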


\begin{proof}
Exercise left to the reader.
\end{proof}

\begin{notations} \label {90q3985yTghvj29hvnr}
Let $\bk$ be a field, let $R$ and $B$ be $\bk$-algebras,
let $N \in \Nat$ and let $R[X] = R[X_1, \dots, X_N] = R^{[N]}$.
Let $\Psi: B \to R[X]$ be a $\bk$-homomorphism. Then for each prime ideal $\pgoth \in \Spec R$ we define the following notations:
\begin{itemize}

\item $\phi_\pgoth : R \to \kappa(\pgoth)$ is the canonical homomorphism, where $\kappa(\pgoth) = R_\pgoth / \pgoth R_\pgoth$.

\item $\tilde\phi_\pgoth : R[X] \to \kappa(\pgoth)[X]$ is the induced homomorphism satisfying $\tilde\phi_\pgoth(X_i) = X_i$ for all $i$.

\item $\Psi^\pgoth : B \to \kappa(\pgoth)[X]$ is the 
composition  $B \xrightarrow{\Psi} R[X] \xrightarrow{\tilde\phi_\pgoth} \kappa(\pgoth)[X]$.

\item $\hat\Psi^\pgoth : \kappa(\pgoth) \otimes_\bk B \to \kappa(\pgoth)[X]$ is 
given by the universal property of the pushout:
\begin{equation} \label {892bf9jHGTiguYTFUKHiu73}
\begin{minipage}{.5\textwidth}
\small 
$$
\raisebox{7mm}{\xymatrix@R=15pt{
&& \kappa(\pgoth)[X] \\
\kappa(\pgoth) \ar[r]^(.4){ } \ar@/^1.3pc/[urr] &  \kappa(\pgoth) \otimes_\bk B \ar @{.>} [ur]^(.35){ \exists !\, \hat\Psi^\pgoth } \\ 
\bk \ar[r]_{ } \ar[u]^{ }  &  B \ar[u]^{ \beta_\pgoth } \ar[uur]_-{ \Psi^\pgoth }
}}
$$
\end{minipage}
\end{equation}

\end{itemize}
Note that $\Psi^\pgoth$ is a $\bk$-homomorphism and that $\hat\Psi^\pgoth$ is a $\kappa(\pgoth)$-homomorphism.
We define: 
$$
\Xeul_\bk( \Psi ) = \setspec{ \pgoth \in \Spec R }{ \text{$\hat\Psi^\pgoth$ is injective} } .
$$
\end{notations}

\begin{theorem}  \label {cp0Q9vn23we9dfcwend0}
Let the setup be as in paragraph \ref{90q3985yTghvj29hvnr}. If $B$ and $R$ are affine $\bk$-domains then the following hold.
\begin{enumerata}

\item \label {9hd8HGuKHytYf7624i-cons}
$\Xeul_\bk( \Psi )$ is a constructible subset of $\Spec R$.

\item \label {9hd8HGuKHytYf7624i-iff}
If $B$ is geometrically integral and $\mgoth$ is a maximal ideal of $R$, then $\mgoth \in \Xeul_\bk(\Psi)$ if and only if $\Psi^\mgoth$ is injective.

\end{enumerata}
\end{theorem}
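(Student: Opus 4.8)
The plan is to reduce everything to a single $R$-algebra homomorphism and then analyse its fibres. First I would assemble the fibrewise maps $\hat\Psi^\pgoth$ into one map over $R$: let $A = R\otimes_\bk B$ and let $\Theta : A \to R[X]$ be the $R$-algebra homomorphism determined by $\Theta(r\otimes b) = r\,\Psi(b)$. Since $\kappa(\pgoth)\otimes_R A = \kappa(\pgoth)\otimes_\bk B$ and $\kappa(\pgoth)\otimes_R R[X] = \kappa(\pgoth)[X]$, one checks on generators that $\hat\Psi^\pgoth = \kappa(\pgoth)\otimes_R\Theta$ for every $\pgoth\in\Spec R$. Two remarks drive the proof of (a): (i) choosing a $\bk$-basis of $B$ exhibits $A$ as a \emph{free} $R$-module, hence $R$-torsion-free; (ii) if $\pgoth_0\in\Spec R$ and $\bar\Psi : B\to(R/\pgoth_0)[X]$ is obtained from $\Psi$ by reduction mod $\pgoth_0$, then $\kappa_R(\pgoth)=\kappa_{R/\pgoth_0}(\pgoth/\pgoth_0)$ and $\hat\Psi^\pgoth=\hat{\bar\Psi}^{\,\pgoth/\pgoth_0}$ for $\pgoth\supseteq\pgoth_0$, so that $\Xeul_\bk(\Psi)\cap V(\pgoth_0)$ is identified with $\Xeul_\bk(\bar\Psi)\subseteq\Spec(R/\pgoth_0)$. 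Remark (ii) makes Noetherian induction available, since $R/\pgoth_0$ is again an affine $\bk$-domain.

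For (a) I would argue by Noetherian induction on closed subsets of $\Spec R$, the inductive step being handled uniformly by producing a dense open set on which membership in $\Xeul_\bk(\Psi)$ is constant. Put $\Igoth=\ker\Theta$. By Grothendieck's generic freeness theorem (applied to $\Igoth$ as an $A$-module, and to $A/\Igoth$ and $R[X]/\image\Theta$ as $R[X]$-modules) there is a nonempty, hence dense, open $U\subseteq\Spec R$ over which all three of $\Igoth$, $A/\Igoth$, $R[X]/\image\Theta$ are free $R$-modules. Over $U$ the two short exact sequences $0\to\Igoth\to A\to A/\Igoth\to0$ and $0\to A/\Igoth\to R[X]\to R[X]/\image\Theta\to0$ consist of free modules, so they remain exact after $\kappa(\pgoth)\otimes_R(-)$; chasing these gives $\ker\hat\Psi^\pgoth\cong\Igoth\otimes_R\kappa(\pgoth)$ for all $\pgoth\in U$. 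As $\Igoth$ is free of some constant rank $\rho$ on $U$ and is $R$-torsion-free, we get $\ker\hat\Psi^\pgoth=0$ for every $\pgoth\in U$ when $\Igoth=0$ (i.e. $\rho=0$), and $\ker\hat\Psi^\pgoth\neq0$ for every $\pgoth\in U$ otherwise. Thus $U\cap\Xeul_\bk(\Psi)$ equals $U$ or $\emptyset$, so it is constructible. Its complement $\Xeul_\bk(\Psi)\setminus U$ lies in the proper closed set $\Spec R\setminus U=V(J)$ with $J\neq0$; writing $V(J)=\bigcup_i V(\pgoth_i)$ with each $\pgoth_i\neq0$ and using remark (ii), each $\Xeul_\bk(\Psi)\cap V(\pgoth_i)=\Xeul_\bk(\bar\Psi_i)$ is constructible by the induction hypothesis. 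Hence $\Xeul_\bk(\Psi)$ is a finite union of constructible sets, proving (a).

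For (b) only the nontrivial implication needs work, since $\beta_\mgoth$ is injective (as $B$ is flat over $\bk$ and $\bk\hookrightarrow\kappa(\mgoth)$), so injectivity of $\hat\Psi^\mgoth$ forces injectivity of $\Psi^\mgoth=\hat\Psi^\mgoth\circ\beta_\mgoth$. Conversely assume $\Psi^\mgoth$ injective. Because $\mgoth$ is maximal in the affine $\bk$-domain $R$, Zariski's lemma gives that $\kappa(\mgoth)=R/\mgoth$ is algebraic over $\bk$. Geometric integrality of $B$ makes $\kappa(\mgoth)\otimes_\bk B$ a domain, of dimension $d:=\dim B=\trdeg_\bk B$ by Lemma~\ref{0si1heazd93g9w8las0}(a); equivalently $\trdeg_{\kappa(\mgoth)}\big(\kappa(\mgoth)\otimes_\bk B\big)=d$. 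Fix a transcendence basis $t_1,\dots,t_d$ of $B$ over $\bk$. Injectivity of $\Psi^\mgoth$ makes $\Psi^\mgoth(t_1),\dots,\Psi^\mgoth(t_d)\in\kappa(\mgoth)[X]$ algebraically independent over $\bk$, and since $\kappa(\mgoth)/\bk$ is algebraic they are then algebraically independent over $\kappa(\mgoth)$. As these elements lie in $\image\hat\Psi^\mgoth$, a quotient of the $d$-dimensional domain $\kappa(\mgoth)\otimes_\bk B$, we get $\trdeg_{\kappa(\mgoth)}\image\hat\Psi^\mgoth=d$, so the prime $\qgoth:=\ker\hat\Psi^\mgoth$ satisfies $\dim\big((\kappa(\mgoth)\otimes_\bk B)/\qgoth\big)=d$. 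Since $\kappa(\mgoth)\otimes_\bk B$ is an affine domain over a field, the dimension formula $\dim\big((\kappa(\mgoth)\otimes_\bk B)/\qgoth\big)+\haut\qgoth=d$ forces $\haut\qgoth=0$, whence $\qgoth=0$ and $\hat\Psi^\mgoth$ is injective.

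The main obstacle is part (a): the set $\Xeul_\bk(\Psi)$ is in general neither open nor closed (the introduction notes that such embedding loci can be dense with empty interior), so constructibility cannot be read off any single generic-flatness statement. The device that overcomes this is the combination of generic freeness, which pins down the injectivity of $\hat\Psi^\pgoth$ on a dense open set through the single module $\Igoth=\ker\Theta$, with Noetherian induction via the restriction identity $\Xeul_\bk(\Psi)\cap V(\pgoth_0)=\Xeul_\bk(\bar\Psi)$. The point demanding most care is the base-change computation $\ker\hat\Psi^\pgoth\cong\Igoth\otimes_R\kappa(\pgoth)$, which needs the freeness of \emph{both} $A/\Igoth$ and $R[X]/\image\Theta$ in order to preserve left-exactness when passing to fibres.
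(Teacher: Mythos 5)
Your part (b) is essentially the paper's argument (transcendence degree of the image is forced up to $\dim B$ because $\kappa(\mgoth)/\bk$ is algebraic, and a surjection of an affine domain onto a domain of the same transcendence degree has trivial kernel), so there is nothing to compare there. For part (a) you take a genuinely different route. The paper reduces constructibility to Matsumura's criterion (``dense in $V(\pgoth)$ implies contains a nonempty open subset of $V(\pgoth)$'') and then proves the key implication ``$\Xeul_\bk(\Psi)$ dense $\Rightarrow$ contains a nonempty open set'' by hand: it shows density puts the generic point in $\Xeul_\bk(\Psi)$, uses Eakin's lemma to cut the number of variables down to $\dim B$, and then tracks leading coefficients of algebraic-dependence relations $P_j(X_j)=0$ to show each $X_j$ stays algebraic over $\image\hat\Psi^\pgoth$ on a dense open set. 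You instead package the fibrewise maps into the single $R$-algebra map $\Theta:R\otimes_\bk B\to R[X]$ and run the standard Chevalley-style template: generic freeness makes membership in $\Xeul_\bk(\Psi)$ constant (equal to the vanishing of $\Igoth=\ker\Theta$) on a dense open set, and Noetherian induction via the restriction identity $\Xeul_\bk(\Psi)\cap V(\pgoth_0)=\Xeul_\bk(\bar\Psi)$ handles the complement. Your route is shorter and avoids Eakin's lemma entirely in part (a); the paper's route is more elementary (no Tor/flatness) and produces the explicit intermediate statements \eqref{p0c9n2Z30edYej2X30f9f}--\eqref{98BjfohgfYhi8923o2948yhc}, though your argument recovers their content implicitly ($\Igoth=0$ iff the generic point lies in $\Xeul_\bk(\Psi)$, by exactness of localization).

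One point needs repair. You invoke generic freeness for $R[X]/\image\Theta$ ``as an $R[X]$-module,'' but $\image\Theta$ is a subring of $R[X]$, not an ideal, so this quotient is not an $R[X]$-module and the basic form of generic freeness (finitely generated module over a finitely generated $R$-algebra) does not apply to it as stated. What you need is the strong form of generic freeness for a tower $R\to S\to T$ with $S=\image\Theta$, $T=R[X]$, applied to $M=R[X]$ modulo its cyclic $S$-submodule $N=\image\Theta$; this is a standard theorem (Hochster--Roberts; see also the refined generic freeness statements in Eisenbud or Matsumura), and with that citation your base-change computation $\ker\hat\Psi^\pgoth\cong\Igoth\otimes_R\kappa(\pgoth)$ on the dense open set is correct. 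The rest of the argument (torsion-freeness of $R\otimes_\bk B$ guaranteeing $\Igoth_f\neq0$ when $\Igoth\neq0$, and the Noetherian induction) goes through as written.
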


\begin{proof}
Let $n=\dim B$.
We first prove \eqref{9hd8HGuKHytYf7624i-iff}.
Let $\mgoth$ be a maximal ideal of $R$ and consider diagram \eqref{892bf9jHGTiguYTFUKHiu73} with $\pgoth = \mgoth$.
Since the canonical homomorphism $B \to \kappa(\mgoth) \otimes_\bk B$ is injective, it is clear that if 
$\mgoth \in  \Xeul_\bk(\Psi)$ then $\Psi^\mgoth$ is injective. Conversely, suppose that $\Psi^\mgoth$ is injective.
Then $\trdeg_\bk( \image \Psi^\mgoth ) = n$.
By \eqref{892bf9jHGTiguYTFUKHiu73}, we have $\image(\Psi^\mgoth) \subseteq \image(\hat\Psi^\mgoth)$,
so $\trdeg_\bk(\image \hat\Psi^\mgoth ) \ge n$. 
Since $\kappa(\mgoth)$ is an algebraic extension of $\bk$,
$\trdeg_{\kappa(\mgoth)}( \image \hat\Psi^\mgoth ) = \trdeg_{\bk}( \image \hat\Psi^\mgoth ) \ge n$.
As $B$ is geometrically integral, $\dom( \hat\Psi^\mgoth ) =  \kappa(\mgoth) \otimes_\bk B$ is an affine $\kappa(\mgoth)$-domain of dimension $n$
(by Lemma \ref{0si1heazd93g9w8las0});
it follows that $\trdeg_{\kappa(\mgoth)}( \image \hat\Psi^\mgoth ) \ge \trdeg_{\kappa(\mgoth)}( \dom \hat\Psi^\mgoth )$,
so $\hat\Psi^\mgoth$ is injective and $\mgoth \in  \Xeul_\bk(\Psi)$. So  \eqref{9hd8HGuKHytYf7624i-iff} is proved.

To prove \eqref{9hd8HGuKHytYf7624i-cons}, we may assume that $\Xeul_\bk(\Psi) \neq \emptyset$. 
Then $\kappa(\pgoth) \otimes_\bk B \subseteq \kappa(\pgoth)[X] = \kappa(\pgoth)^{[N]}$ for any $\pgoth \in  \Xeul_\bk(\Psi)$,
so $B$ is geometrically integral by Lemma \ref{p0cwosHyoFto693ud}; also,  Lemma \ref{0si1heazd93g9w8las0} implies that $N\ge \dim B = n$ and:
$$
\textit{for each $\pgoth \in \Spec R$, $\kappa(\pgoth) \otimes_\bk B$ is an affine $\kappa(\pgoth)$-domain of dimension $n$.}
$$
The first step in the proof of \eqref{9hd8HGuKHytYf7624i-cons} consists in proving: 
\begin{equation} \label {p0c9n2Z30edYej2X30f9f}
\textit{If $N=\dim B$ and $0 \in \Xeul_\bk( \Psi )$ then $\Xeul_\bk( \Psi )$ contains a nonempty open subset of $\Spec R$.}
\end{equation}
Here, $0$ stands for the zero ideal of $R$.
Note that the codomain of $\Psi$ is $R[X] = R[X_1,\dots,X_n] = R^{[n]}$ (because $N=n$)
and consider the homomorphism $\hat \Psi : R \otimes_\bk B \to R[X]$ given by $\hat\Psi(r \otimes b) = r\Psi(b)$.
Let  $K = \Frac R$.
Since $0 \in \Xeul_\bk( \Psi )$, ${\hat\Psi^0} : K \otimes_\bk B \to K[X]$ is an injective $K$-homomorphism.
For each $\pgoth \in \Spec R$, there is a commutative diagram
$$
\xymatrix{
\kappa(\pgoth)[X] &   R[X] \ar[r]^{\tilde\phi_0} \ar[l]_{\tilde\phi_\pgoth}      &   K[X]   \\
\kappa(\pgoth) \otimes_\bk B \ar[u]^{\hat\Psi^\pgoth}   &   R \otimes_\bk B \ar[r] \ar[l] \ar[u]^{\hat\Psi}   &   K \otimes_\bk B \ar[u]^{\hat\Psi^0}
}
$$
Let $j \in \{1,\dots,n\}$.
Since $K \otimes_\bk B$ is an affine $K$-domain of dimension $n$ and ${\hat\Psi^0}$ is an injective $K$-homomorphism,
$X_j$ is algebraic over ${\hat\Psi^0}( K \otimes_\bk B )$;
since ${\hat\Psi^0}( K \otimes_\bk B )$ is a localization of $\hat\Psi( R \otimes_\bk B )$,
it follows that $X_j$ is algebraic over $\hat\Psi( R \otimes_\bk B )$,
so there exists a nonzero polynomial in one variable $P_j(T) \in \hat\Psi( R \otimes_\bk B )[T]$ such that $P_j( X_j ) = 0$.
Let $c_j \in R \otimes_\bk B$ be such that $\hat\Psi(c_j) \in \hat\Psi( R \otimes_\bk B ) \setminus \{0\}$ is the leading coefficient of $P_j(T)$.
Since $\hat\Psi(c_j)$ is a nonzero polynomial in $R[X]$, the ideal $I_j$ of $R$ generated by the coefficients of $\hat\Psi(c_j)$ is nonzero,
and consequently $U_j = \Spec R \setminus V( I_j )$ is a nonempty open subset of $\Spec R$.
If $\pgoth \in U_j$ then $\hat\Psi(c_j) \notin \pgoth[X]$ and consequently $\tilde\phi_\pgoth( \hat\Psi(c_j) ) \neq 0$.
Since $\tilde\phi_\pgoth( \hat\Psi(c_j) )$ is a coefficient of the polynomial $P_j^{( \tilde\phi_\pgoth )}(T) \in  \hat\Psi^\pgoth( \kappa(\pgoth) \otimes_\bk B )[T]$,
we have  $P_j^{( \tilde\phi_\pgoth )}(T) \neq 0$.
Now $\kappa(\pgoth) \subseteq \image(\hat\Psi^\pgoth) \subseteq \kappa(\pgoth)[X]$,
$P_j^{( \tilde\phi_\pgoth )}(T)$ is a nonzero polynomial in $T$ with coefficients in the ring $\image(\hat\Psi^\pgoth)$,
and $P_j^{( \tilde\phi_\pgoth )}( X_j ) = \tilde\phi_\pgoth( P_j(X_j)) = 0$; so $X_j$ is algebraic over $\image( \hat\Psi^\pgoth )$.

Consider the nonempty open subset $U = \bigcap_{j=1}^n U_j$ of $\Spec R$. Let $\pgoth \in U$.
The preceding paragraph implies that $X_1, \dots, X_n$ are algebraic over  $\image( \hat\Psi^\pgoth )$,
so $\kappa(\pgoth)[X]$ is algebraic over $\image(\hat\Psi^\pgoth)$
and consequently $\trdeg_{\kappa(\pgoth)}( \image \hat\Psi^\pgoth ) = n$;
as $\dom( \hat\Psi^\pgoth ) = \kappa(\pgoth) \otimes_\bk B$ is an affine $\kappa(\pgoth)$-domain of dimension $n$ and $\hat\Psi^\pgoth$ is 
a $\kappa(\pgoth)$-homomorphism, it follows that $\hat\Psi^\pgoth$ is injective and hence that $\pgoth \in \Xeul_\bk( \Psi )$.
This shows that $U \subseteq \Xeul_\bk( \Psi )$, so \eqref{p0c9n2Z30edYej2X30f9f} is proved.
Next, we generalize \eqref{p0c9n2Z30edYej2X30f9f} slightly. Let us prove:
\begin{equation} \label {7vJG872gd3f7gvkhiutdhiX}
\textit{If $0 \in \Xeul_\bk( \Psi )$ then $\Xeul_\bk( \Psi )$ contains a nonempty open subset of $\Spec R$.}
\end{equation}
By assumption, $\hat\Psi^0 : \kappa(0) \otimes_\bk B \to \kappa(0)[X]$ is injective.
By Lemma B of \cite{Eakin72} applied to $ \kappa(0) \subseteq \kappa(0) \otimes_\bk B \subseteq \kappa(0)[X]$,
there exists a $\kappa(0)$-homomorphism $\rho : \kappa(0)[X_1, \dots, X_N] \to \kappa(0)[Y_1, \dots, Y_n] = \kappa(0)^{[n]}$
such that $\rho \circ \hat\Psi^0 :  \kappa(0) \otimes_\bk B \to \kappa(0)[Y]$ is injective.
Since $\Frac R = \kappa(0)$, there exists $r \in R \setminus \{0\}$ such that if we define $A = R[1/r]$ then
the image of the composite $B \to \kappa(0) \otimes_\bk B \xrightarrow{ \rho \circ \hat\Psi^0 } \kappa(0)[Y]$ is included in $A[Y]$.
So there is a $\bk$-homomorphism $\Psi_1 : B \to A[Y]$ such that the following diagram commutes:
$$
\xymatrix{
\kappa(0) \otimes_\bk B  \ar[r]^-{ \rho \circ \hat\Psi^0 } & \kappa(0)[Y_1,\dots,Y_n]  \\
B  \ar[u] \ar[r]_-{ \Psi_1 }  &  A[Y_1,\dots,Y_n] \ar[u] 
}
$$
Since $\hat\Psi_1^0$ coincides exactly with $\rho \circ \hat\Psi^0$, which is injective, we have $0 \in \Xeul_\bk( \Psi_1 )$,
so $\Psi_1$ satisfies the hypothesis of \eqref{p0c9n2Z30edYej2X30f9f}.
Thus $\Xeul_\bk( \Psi_1 )$ contains a nonempty open subset of $\Spec A$.
It is straightforward to check that the open immersion $\Spec A \to \Spec R$ maps $\Xeul_\bk( \Psi_1 )$ into  $\Xeul_\bk( \Psi )$,
so $\Xeul_\bk( \Psi )$ contains a nonempty open subset of $\Spec R$ and \eqref{7vJG872gd3f7gvkhiutdhiX} is proved.
From this, let us deduce that the following is true:
\begin{equation} \label {98BjfohgfYhi8923o2948yhc}
\begin{minipage}[t]{.9\textwidth}
\it If $\Xeul_\bk( \Psi )$ is dense in $\Spec R$, then  $\Xeul_\bk( \Psi )$ contains a nonempty open subset of $\Spec R$.
\end{minipage}
\end{equation}
So assume that $\Xeul_\bk( \Psi )$ is dense in $\Spec R$.
Let $(b_1, \dots, b_n)$ be a family of elements of $B$ that is a transcendence basis of $\Frac B$ over $\bk$ (recall that  $n = \dim B = \trdeg_\bk B$),
and consider the family $\big( \Psi(b_i) \big)_{i=1}^n$ in $R[X]$;
we claim that $\big( \Psi(b_i) \big)_{i=1}^n$ is algebraically independent over $R$. 
Indeed, consider $G \in R[T_1, \dots, T_n] = R^{[n]}$  such that $G( \Psi(b_1), \dots, \Psi(b_n) ) = 0$.
Let $\pgoth \in \Xeul_\bk(\Psi)$.
Then the polynomial $G^{( \phi_\pgoth )} \in \kappa(\pgoth)[T_1, \dots, T_n]$ satisfies
$G^{( \phi_\pgoth )}\big(  \hat\Psi^\pgoth (1 \otimes b_1) , \dots,  \hat\Psi^\pgoth (1 \otimes b_n)  \big)
= G^{( \phi_\pgoth )}\big( \Psi^\pgoth (b_1), \dots, \Psi^\pgoth (b_n) \big)
= G^{( \phi_\pgoth )}\big( \tilde\phi_\pgoth(\Psi( b_1 )), \dots, \tilde\phi_\pgoth(\Psi( b_1 )) \big)
= \tilde\phi_\pgoth \big( G(\Psi( b_1 ), \dots, \Psi( b_n ) ) \big)
= \tilde\phi_\pgoth ( 0 ) = 0$.
Since the family $(1 \otimes b_i)_{i=1}^n$ of elements of $\kappa(\pgoth) \otimes_\bk B$
is algebraically independent over $\kappa(\pgoth)$ (Lemma \ref{0cv9n2w0dZw0dI28efydldc0}),
and since $\hat\Psi^\pgoth$ is an injective $\kappa(\pgoth)$-homomorphism,
the family $\big( \hat\Psi^\pgoth (1 \otimes b_i) \big)_{i=1}^n$ of elements of $\kappa(\pgoth)[X]$
is algebraically independent over $\kappa(\pgoth)$; so 
$G^{( \phi_\pgoth )} = 0$ and hence $G \in \pgoth[T_1, \dots, T_n]$.
Since this is true for every $\pgoth \in \Xeul_\bk(\Psi)$, and since $\Xeul_\bk( \Psi )$ is dense in $\Spec R$, it follows that $G=0$.
This proves that $\big( \Psi(b_i) \big)_{i=1}^n$ is algebraically independent over $R$. 
Since $\phi_0 : R \to \kappa(0)$ is the inclusion of $R$ in its field of fractions,
the image  $\big( \tilde\phi_0(\Psi(b_i)) \big)_{i=1}^n$ of $\big( \Psi(b_i) \big)_{i=1}^n$ by $\tilde\phi_0 : R[X] \to \kappa(0)[X]$ 
is a family of elements of $\kappa(0)[X]$ that is algebraically independent over $\kappa(0)$. 
Since $\tilde\phi_0(\Psi(b_i)) = \Psi^0(b_i) = \hat\Psi^0( 1 \otimes b_i )$,
$\big( \tilde\phi_0(\Psi(b_i)) \big)_{i=1}^n$ is a family of elements of $\image\hat\Psi^0$,
so $\trdeg_{ \kappa(0) }( \image\hat\Psi^0 ) \ge n$.
By Lemma \ref{0si1heazd93g9w8las0} and the fact that $B$ is geometrically integral,
$\dom\hat\Psi^0 = \kappa(0) \otimes_\bk B$ is an affine $\kappa(0)$-domain of dimension equal to $\dim B = n$,
so  $\trdeg_{ \kappa(0) }( \image\hat\Psi^0 ) \ge \trdeg_{ \kappa(0) }( \dom\hat\Psi^0 )$ and hence
$\hat\Psi^0 : \kappa(0) \otimes_\bk B \to \kappa(0)[X]$ is injective.
Thus $0 \in \Xeul_\bk(\Psi)$, so \eqref{7vJG872gd3f7gvkhiutdhiX} implies that 
$\Xeul_\bk(\Psi)$ contains a nonempty open subset of $\Spec R$. So \eqref{98BjfohgfYhi8923o2948yhc} is proved.

For each $\pgoth \in \Spec R$, we write $V(\pgoth) = \setspec{ \qgoth \in \Spec R }{ \pgoth \subseteq \qgoth }$.
To show that $\Xeul_\bk( \Psi )$ is constructible, it suffices (by Prop.\ 6.C of \cite{Matsumura}) to prove the following:
\begin{equation} \label {p9cvn23efCvd0i}
\begin{minipage}[t]{.9\textwidth}
\it If $\pgoth \in \Spec R$ is such that $V(\pgoth) \cap \Xeul_\bk( \Psi )$ is dense in $V(\pgoth)$, 
then  $V(\pgoth) \cap \Xeul_\bk( \Psi )$ contains a nonempty open subset of $V(\pgoth)$. 
\end{minipage}
\end{equation}
To prove this, consider $\pgoth \in \Spec R$ such that $V(\pgoth) \cap \Xeul_\bk( \Psi )$ is dense in $V(\pgoth)$.
The canonical homomorphism $R \to R/\pgoth$ extends to a homomorphism $R[X] \to (R/\pgoth)[X]$ that sends each $X_i$ to itself;
let us define $\Psi_1 : B \to (R/\pgoth)[X]$ to be the composition $B \xrightarrow{\Psi} R[X] \to (R/\pgoth)[X]$.
Then it is straightforward to verify that the canonical homeomorphism $f : \Spec(R/\pgoth) \to V(\pgoth)$ satisfies
$f\big( \Xeul_\bk( \Psi_1 ) \big) = V(\pgoth) \cap \Xeul_\bk( \Psi )$.
Since $V(\pgoth) \cap \Xeul_\bk( \Psi )$ is dense in $V(\pgoth)$, it follows that $\Xeul_\bk( \Psi_1 )$ is dense in $\Spec(R/\pgoth)$.
By \eqref{98BjfohgfYhi8923o2948yhc}, we obtain that $\Xeul_\bk( \Psi_1 )$ contains a nonempty open subset of $\Spec(R/\pgoth)$.
So $f\big( \Xeul_\bk( \Psi_1 ) \big) = V(\pgoth) \cap \Xeul_\bk( \Psi )$ contains  a nonempty open subset of $V(\pgoth)$.
So \eqref{p9cvn23efCvd0i} is proved, and so is the Theorem.
\end{proof}

\begin{corollary} \label {pc9802oi3we0fdwUe}
Let $\bk$ be a field and $B$ an affine $\bk$-domain such that $\PE_\bk(B) \neq \emptyset$.
\begin{enumerata}

\item \label {9239r0rejg03er} There exists an affine $\bk$-domain $R$ satisfying $\Frac(R)/\bk \in \PE_\bk(B)$.

\item \label {90dsGHjFGugkjgUYT83} For any $R$ as in \eqref{9239r0rejg03er}, there exists a nonempty open subset $U$ of $\Spec R$ satisfying 
$$
\kappa(\pgoth) / \bk  \in  \PE_\bk(B) \ \ \text{for all $\pgoth \in U$,}
$$
where we write $\kappa(\pgoth) = R_\pgoth / \pgoth R_\pgoth$.

\end{enumerata}
\end{corollary}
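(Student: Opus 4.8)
The plan is to obtain (a) directly from Lemma \ref{XJ03rfwIe0d1Z20wdskfUw9e8X}\,(b), and to obtain (b) by a \emph{spreading-out} argument that reduces the situation to the hypothesis of Theorem \ref{cp0Q9vn23we9dfcwend0}. For (a), I would start from any $K/\bk \in \PE_\bk(B)$. Since $B$ is an affine $\bk$-domain, hence finitely generated, Lemma \ref{XJ03rfwIe0d1Z20wdskfUw9e8X}\,(b) supplies a finitely generated field extension $K_0/\bk$ with $\bk \subseteq K_0 \subseteq K$ and $K_0/\bk \in \PE_\bk(B)$. Writing $K_0 = \bk(a_1,\dots,a_m)$, the $\bk$-subalgebra $R = \bk[a_1,\dots,a_m]$ of $K_0$ is an affine $\bk$-domain with $\Frac(R) = K_0$, so $\Frac(R)/\bk \in \PE_\bk(B)$, which is (a).

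For (b), fix $R$ as in (a) and put $K = \Frac(R)$. By Lemma \ref{0si1heazd93g9w8las0}\,(c) there is an injective $K$-homomorphism $\phi : K \otimes_\bk B \to K[X_1,\dots,X_n] = K^{[n]}$ with $n = \dim B$. Writing $B = \bk[b_1,\dots,b_s]$, each polynomial $\phi(1 \otimes b_i)$ has finitely many coefficients in $K = \Frac(R)$, so I would choose $r \in R \setminus \{0\}$ clearing all their denominators and set $A = R[1/r]$, an affine $\bk$-domain with $\Frac(A) = K$. The $\bk$-homomorphism $B \to K \otimes_\bk B \xrightarrow{\phi} K[X]$ then factors through a $\bk$-homomorphism $\Psi : B \to A[X] = A^{[n]}$, placing me in the setting of Notations \ref{90q3985yTghvj29hvnr} with $A$ in place of $R$.

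Next I would identify the induced map at the generic point. The residue field of $A$ at the zero ideal is $\kappa(0) = \Frac(A) = K$, and by the uniqueness clause in the pushout that defines $\hat\Psi^0$ (the diagram in Notations \ref{90q3985yTghvj29hvnr}) the $K$-homomorphism $\hat\Psi^0 : K \otimes_\bk B \to K[X]$ coincides with $\phi$; in particular $\hat\Psi^0$ is injective, so $0 \in \Xeul_\bk(\Psi)$. By statement \eqref{7vJG872gd3f7gvkhiutdhiX} in the proof of Theorem \ref{cp0Q9vn23we9dfcwend0} (equivalently: $\Xeul_\bk(\Psi)$ is constructible by part \eqref{9hd8HGuKHytYf7624i-cons} of that theorem and contains the generic point of the irreducible space $\Spec A$, hence is dense), $\Xeul_\bk(\Psi)$ contains a nonempty open subset $U$ of $\Spec A$. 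Since $A = R[1/r]$, the open immersion identifies $\Spec A$ with the distinguished open $\Spec R \setminus V(r)$, so $U$ is open in $\Spec R$ as well, and for each $\pgoth \in U$ the residue field $R_\pgoth/\pgoth R_\pgoth$ agrees with the corresponding residue field of $A$. Finally, for every $\pgoth \in U \subseteq \Xeul_\bk(\Psi)$ the map $\hat\Psi^\pgoth : \kappa(\pgoth) \otimes_\bk B \to \kappa(\pgoth)[X] = \kappa(\pgoth)^{[n]}$ is injective, which is precisely the assertion $\kappa(\pgoth)/\bk \in \PE_\bk(B)$; this proves (b).

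Once Theorem \ref{cp0Q9vn23we9dfcwend0} is in hand the argument is largely bookkeeping, and the only genuinely delicate points are the two hinges of the spreading-out step: first, passing from a map defined over the fraction field $K$ to a $\bk$-homomorphism $\Psi$ defined over the finitely generated algebra $A = R[1/r]$ in such a way that the identification $\hat\Psi^0 = \phi$—and hence the crucial hypothesis $0 \in \Xeul_\bk(\Psi)$—is secured; and second, verifying that localizing $R$ to $A$ neither enlarges the relevant residue fields nor disturbs the identification of $\Spec A$ with an open subscheme of $\Spec R$. I expect the first of these to be the main obstacle, since it is exactly the point where the injectivity furnished by $\PE_\bk(B) \neq \emptyset$ is converted into the generic-point hypothesis needed to invoke Theorem \ref{cp0Q9vn23we9dfcwend0}.
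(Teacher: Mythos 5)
Your proposal is correct and follows essentially the same route as the paper: part (a) via Lemma \ref{XJ03rfwIe0d1Z20wdskfUw9e8X}\,(b), and part (b) by clearing denominators to get $\Psi : B \to A[X]$ with $A = R[1/r]$, identifying $\hat\Psi^0$ with $\phi$ so that $0 \in \Xeul_\bk(\Psi)$, and then invoking the constructibility of $\Xeul_\bk(\Psi)$ from Theorem \ref{cp0Q9vn23we9dfcwend0} together with the open immersion $\Spec A \to \Spec R$. The two ``delicate points'' you flag are handled exactly as in the paper's proof, so there is nothing to add.
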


\begin{proof}
Assertion \eqref{9239r0rejg03er} follows from Lemma \ref{XJ03rfwIe0d1Z20wdskfUw9e8X}.
We prove \eqref{90dsGHjFGugkjgUYT83}.
Let $K = \Frac R$. Since $K/\bk \in \PE_\bk(B)$, there exists an injective
$K$-homomorphism $\phi : K \otimes_\bk B \to K[X] =K[X_1,\dots,X_n] = K^{[n]}$ (for some $n \in \Nat$).
There exists $r \in R \setminus \{0\}$ such that, if we define $A = R[1/r]$, the image of the composite $B \to K \otimes_\bk B \xrightarrow{\phi} K[X]$
is included in $A[X]$. 
So we may consider the unique $\bk$-homomorphism $\Psi : B \to A[X]$ that makes
$$
\xymatrix@R=13pt{
K \otimes_\bk B \ar[r]^-{\phi}   &   K[X]  \\
B  \ar[u] \ar[r]_-{\Psi} &   A[X]  \ar[u]
}
$$
commute.
If $0$ denotes the zero ideal of $A$ then $\hat\Psi^0 = \phi$, so $\hat\Psi^0$ is injective and hence $0 \in \Xeul_\bk( \Psi )$; 
so $\Xeul_\bk( \Psi )$ is a dense subset of $\Spec A$.
Since $\Xeul_\bk( \Psi )$ is constructible by Thm \ref{cp0Q9vn23we9dfcwend0}, 
$\Xeul_\bk( \Psi )$ contains a nonempty open subset $U'$ of $\Spec A$.
Noting that $f : \Spec A \to \Spec R$ is an open immersion, we see that $U = f(U')$ is a nonempty open subset of $\Spec R$ that has the desired property.
\end{proof}

\begin{corollary} \label {kxjcop923wpeds}
Let $\bk$ be a field and $B$ an affine $\bk$-domain such that $\PE_\bk(B) \neq \emptyset$.
\begin{enumerata}

\item There exists a finite extension $\bk'$ of $\bk$ satisfying $\bk' / \bk \in \PE_\bk(B)$.

\item The algebraic closure $\ck$ of $\bk$ satisfies $\ck/\bk \in \PE_\bk(B)$.

\end{enumerata}
\end{corollary}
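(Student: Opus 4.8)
The plan is to deduce both assertions from Corollary \ref{pc9802oi3we0fdwUe}, the only new ingredient being the elementary geometry of an affine $\bk$-domain. For (a), I would first apply Corollary \ref{pc9802oi3we0fdwUe}\eqref{9239r0rejg03er} to obtain an affine $\bk$-domain $R$ with $\Frac(R)/\bk \in \PE_\bk(B)$, and then Corollary \ref{pc9802oi3we0fdwUe}\eqref{90dsGHjFGugkjgUYT83} to produce a nonempty open subset $U \subseteq \Spec R$ such that $\kappa(\pgoth)/\bk \in \PE_\bk(B)$ for every $\pgoth \in U$, where $\kappa(\pgoth) = R_\pgoth/\pgoth R_\pgoth$. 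The crucial step is to choose $\pgoth$ to be a \emph{maximal} ideal. Since $R$ is a finitely generated $\bk$-algebra it is a Jacobson ring, so its closed points are dense in $\Spec R$; hence the nonempty open set $U$ contains a maximal ideal $\mgoth$. For such $\mgoth$ one has $\kappa(\mgoth) = R_\mgoth/\mgoth R_\mgoth = R/\mgoth$, and Zariski's form of the Nullstellensatz guarantees that $R/\mgoth$ is a \emph{finite} extension of $\bk$. Setting $\bk' = \kappa(\mgoth)$ then gives a finite extension of $\bk$ with $\bk'/\bk \in \PE_\bk(B)$, which is exactly (a).

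For (b), I would propagate the embedding up to the algebraic closure. The finite extension $\bk'$ produced in (a) is algebraic over $\bk$, hence admits a $\bk$-embedding $\bk' \hookrightarrow \ck$; viewing $\ck$ as an overfield of $\bk'$ through a fixed such embedding, Lemma \ref{XJ03rfwIe0d1Z20wdskfUw9e8X}(a), applied with $K = \bk'$ and $L = \ck$, yields $\ck/\bk \in \PE_\bk(B)$.

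The genuinely new content beyond the previous corollary is the observation that a nonempty open subset of $\Spec R$ must contain a closed point whose residue field is finite over $\bk$. Both facts underlying this — density of closed points in the spectrum of a Jacobson ring, and finiteness over $\bk$ of the residue field at a maximal ideal of an affine $\bk$-algebra — are standard, so I do not expect any serious obstacle. The only point requiring a little care is the legitimacy of invoking ``overfield'' in Lemma \ref{XJ03rfwIe0d1Z20wdskfUw9e8X}(a): one must commit to a definite $\bk$-embedding of $\bk'$ into $\ck$ so that $\ck$ literally contains $\bk'$ as a subfield over $\bk$. This is harmless, since whether a field extension of $\bk$ lies in $\PE_\bk(B)$ depends only on its $\bk$-isomorphism class.
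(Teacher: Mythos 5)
Your proposal is correct and follows essentially the same route as the paper: invoke Corollary \ref{pc9802oi3we0fdwUe} to get $R$ and the open set $U$, pick a maximal ideal $\mgoth \in U$ (whose residue field is finite over $\bk$ by the Nullstellensatz) for part (a), and then pass to the algebraic closure via Lemma \ref{XJ03rfwIe0d1Z20wdskfUw9e8X}(a) for part (b). The extra justifications you supply (density of closed points in a Jacobson ring, fixing a $\bk$-embedding of $\bk'$ into $\ck$) are correct and merely make explicit what the paper leaves implicit.
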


\begin{proof}
By Cor.\ \ref{pc9802oi3we0fdwUe},
there exists an affine $\bk$-domain $R$ such that $\Frac(R) / \bk \in \PE_\bk(B)$ and 
a nonempty open subset $U$ of $\Spec R$ such that $\kappa(\pgoth)/\bk \in \PE_\bk(B)$ for all $\pgoth \in U$,
where $\kappa(\pgoth) = R_\pgoth / \pgoth R_\pgoth$. Choose a maximal ideal $\mgoth$ of $R$ such that $\mgoth \in U$.
Then $\kappa(\mgoth) / \bk \in \PE_\bk(B)$ and $\kappa(\mgoth) / \bk$ is a finite extension, so (a) is proved.
As $\ck$ is an overfield of any finite extension $\bk'$ of $\bk$,
assertion (b) follows from (a) and Lemma \ref{XJ03rfwIe0d1Z20wdskfUw9e8X}. 
\end{proof}

\begin{definition}  \label {cvcn2o39wscnpwdni}
We say that a field extension $L/K$ \textit{has the density property} if it is finitely generated and the following equivalent conditions hold:
\begin{itemize}

\item for some affine $K$-domain $R$ satisfying $\Frac R = L$, $K$-rational points are dense in $\Spec R$;
\item for every affine $K$-domain $R$ satisfying $\Frac R = L$, $K$-rational points are dense in $\Spec R$.

\end{itemize}
\end{definition}

Note the following descent property for $\PE_\bk(B)$:

\begin{corollary}  \label {Ocj30edif2931402dfno}
Let $\bk \subseteq K \subseteq L$ be fields and let $B$ be an affine $\bk$-domain.
If $L/\bk \in \PE_\bk(B)$ and $L/K$ has the density property, then $K/\bk \in \PE_\bk(B)$.
\end{corollary}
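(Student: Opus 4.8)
The plan is to reduce everything to the base field $K$ and then invoke Corollary \ref{pc9802oi3we0fdwUe}. Set $B' = K \otimes_\bk B$. Since $L/\bk \in \PE_\bk(B)$ we have $\PE_\bk(B) \neq \emptyset$, so $B$ is geometrically integral by Lemma \ref{p0cwosHyoFto693ud}; in particular $B' = K \otimes_\bk B$ is a domain, and it is clearly finitely generated over $K$, hence an affine $K$-domain. The key observation is the base-change identity $L \otimes_K B' = L \otimes_K (K \otimes_\bk B) = L \otimes_\bk B$. Consequently, the condition $L/\bk \in \PE_\bk(B)$ (an injective $L$-homomorphism $L \otimes_\bk B \to L^{[n]}$ for some $n$) says exactly that there is an injective $L$-homomorphism $L \otimes_K B' \to L^{[n]}$, i.e.\ $L/K \in \PE_K(B')$. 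In particular $\PE_K(B') \neq \emptyset$.

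Next I would apply Corollary \ref{pc9802oi3we0fdwUe} to the affine $K$-domain $B'$, working over the base field $K$. Because $L/K$ has the density property, it is finitely generated, so there exists an affine $K$-domain $R$ with $\Frac R = L$. This $R$ satisfies $\Frac(R)/K = L/K \in \PE_K(B')$, so it is an admissible choice in Corollary \ref{pc9802oi3we0fdwUe}\eqref{9239r0rejg03er}; part \eqref{90dsGHjFGugkjgUYT83} then furnishes a nonempty open subset $U$ of $\Spec R$ such that $\kappa(\pgoth)/K \in \PE_K(B')$ for every $\pgoth \in U$, where $\kappa(\pgoth) = R_\pgoth/\pgoth R_\pgoth$.

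Finally I would exploit the density property itself: since $\Frac R = L$ and $L/K$ has the density property, the $K$-rational points are dense in $\Spec R$, so the nonempty open set $U$ contains a $K$-rational point $\mgoth$, i.e.\ a maximal ideal with $\kappa(\mgoth) = K$. For this $\mgoth$ the relation $\kappa(\mgoth)/K \in \PE_K(B')$ becomes $K/K \in \PE_K(B')$, which means $B' \subseteq K^{[n]}$ for some $n$; unwinding $B' = K \otimes_\bk B$ gives an injective $K$-homomorphism $K \otimes_\bk B \to K^{[n]}$, that is, $K/\bk \in \PE_\bk(B)$, as desired. There is no serious technical obstacle here: the whole argument is a change-of-base maneuver on top of Corollary \ref{pc9802oi3we0fdwUe}, and the only point requiring care is checking that $B' = K \otimes_\bk B$ is genuinely a domain (so that the corollary applies), which is where geometric integrality of $B$, coming from $\PE_\bk(B) \neq \emptyset$ via Lemma \ref{p0cwosHyoFto693ud}, is used.
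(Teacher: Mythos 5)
Your proposal is correct and follows essentially the same route as the paper's own proof: base-change to view the hypothesis as $L/K \in \PE_K(K \otimes_\bk B)$, apply Corollary \ref{pc9802oi3we0fdwUe} to an affine $K$-domain $R$ with $\Frac R = L$, and use the density property to find a $K$-rational point in the resulting open set. Your explicit check that $K \otimes_\bk B$ is a domain (via Lemma \ref{p0cwosHyoFto693ud}) is a point the paper leaves implicit, but otherwise the two arguments coincide.
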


\begin{proof}
The fact that $L/\bk \in \PE_\bk(B)$ implies that  (for some $n$) there
is an injective $L$-homomorphism from $L \otimes_\bk B = L \otimes_K (K \otimes_\bk B)$ to $L^{[n]}$,
so $L \otimes_K (K \otimes_\bk B) \subseteq L^{[n]}$ and consequently $L/K \in \PE_K( K \otimes_\bk B )$. 
Since $L/K$ has the density property, it is finitely generated; so there exists an affine $K$-domain $R$ satisfying $\Frac R = L$.
Then $\Frac(R) / K \in \PE_K( K \otimes_\bk B )$, 
so Cor.\ \ref{pc9802oi3we0fdwUe} implies that there exists a nonempty open subset $U$ of $\Spec R$ such that
$\kappa(\pgoth)/K \in \PE_K( K \otimes_\bk B)$ for all $\pgoth \in U$, where $\kappa(\pgoth) = R_\pgoth / \pgoth R_\pgoth$.
Since $L/K$ has the density property, there exists a $K$-rational point in $U$, i.e., there exists $\pgoth \in U$ such that $\kappa(\pgoth) = K$.
Thus $K/K \in  \PE_K( K \otimes_\bk B )$ and consequently $K \otimes_\bk B \subseteq K^{[n]}$.
This means that $K / \bk \in \PE_\bk(B)$, so we are done.
\end{proof}

\begin{notation} \label {p0c9f2n3w9sdc0qpwo}
Given an algebra $B$ over a field $\bk$, define 
{\small
\begin{align*}
\Xeul_\bk(B) &= \setspec{ \pgoth \in \Spec B }{ \kappa(\pgoth)/\bk \in \PE_\bk(B) }
= \setspec{ \pgoth \in \Spec B }{ \kappa(\pgoth) \otimes_\bk B \subseteq \kappa(\pgoth)^{[n]} \text{ for some $n \in \Nat$} }
\end{align*}
}
where $\kappa(\pgoth) = B_\pgoth / \pgoth B_\pgoth$.
Note that $\Xeul_\bk(B) = \setspec{ \pgoth \in \Spec B }{ \kappa(\pgoth) \otimes_\bk B \subseteq \kappa(\pgoth)^{[ \dim B ]} } $
when $B$ is finitely generated, by Lemma \ref{0si1heazd93g9w8las0}.
\end{notation}

One says that $\Xeul_\bk(B)$ \textit{has nonempty interior} if some nonempty open subset of $\Spec B$ is included in $\Xeul_\bk(B)$.
For reasons explained in the Introduction (and further explained in Section \ref{RingshavingtrivialFMLinvariant}),
we are interested in $\bk$-algebras $B$ such that $\Xeul_\bk(B)$ has nonempty interior.

\begin{corollary} \label {pc0vW2n3ef0qK2Jfij0x03rh}
Given a field $\bk$ and an affine $\bk$-domain $B$,
$$
\text{$\Xeul_\bk(B)$ has nonempty interior} \iff \Frac(B)/\bk \in \PE_\bk(B) .
$$
\end{corollary}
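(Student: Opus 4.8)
The plan is to reformulate both implications in terms of a single point of $\Spec B$. The key observation is that, by the definition in Notation \ref{p0c9f2n3w9sdc0qpwo}, the condition $\Frac(B)/\bk \in \PE_\bk(B)$ says exactly that the zero ideal $(0) \in \Spec B$ lies in $\Xeul_\bk(B)$: indeed, taking $\pgoth = (0)$ gives $\kappa((0)) = B_{(0)}/(0)B_{(0)} = \Frac(B)$, so $(0) \in \Xeul_\bk(B) \iff \kappa((0))/\bk = \Frac(B)/\bk \in \PE_\bk(B)$. With this reformulation in hand, each direction becomes short.

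For the forward implication, I would argue that the generic point does all the work. Assume $\Xeul_\bk(B)$ has nonempty interior, and pick a nonempty open subset $U$ of $\Spec B$ with $U \subseteq \Xeul_\bk(B)$. Since $B$ is a domain, $\Spec B$ is irreducible with generic point $(0)$, and the generic point belongs to every nonempty open subset of an irreducible space. Hence $(0) \in U \subseteq \Xeul_\bk(B)$, which by the reformulation above is precisely $\Frac(B)/\bk \in \PE_\bk(B)$.

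For the reverse implication, I would invoke Cor.\ \ref{pc9802oi3we0fdwUe} with $R = B$. Assuming $\Frac(B)/\bk \in \PE_\bk(B)$, we have $\PE_\bk(B) \neq \emptyset$, and $B$ is itself an affine $\bk$-domain satisfying $\Frac(B)/\bk \in \PE_\bk(B)$, so $R = B$ is a legitimate choice of the ring $R$ in part \eqref{9239r0rejg03er} of that corollary. Part \eqref{90dsGHjFGugkjgUYT83} then furnishes a nonempty open subset $U$ of $\Spec B$ with $\kappa(\pgoth)/\bk \in \PE_\bk(B)$ for every $\pgoth \in U$, where $\kappa(\pgoth) = B_\pgoth/\pgoth B_\pgoth$; this is exactly the statement $U \subseteq \Xeul_\bk(B)$, so $\Xeul_\bk(B)$ has nonempty interior.

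I do not expect a genuine obstacle here: the substantive content has already been packaged into Cor.\ \ref{pc9802oi3we0fdwUe} (and, through it, into the constructibility result Thm \ref{cp0Q9vn23we9dfcwend0}). The only points requiring care are bookkeeping ones — confirming that the residue field $\kappa(\pgoth) = B_\pgoth/\pgoth B_\pgoth$ used in Notation \ref{p0c9f2n3w9sdc0qpwo} matches the one appearing in Cor.\ \ref{pc9802oi3we0fdwUe} when $R$ is specialized to $B$, and recognizing that identifying $\Frac(B)/\bk \in \PE_\bk(B)$ with membership of the generic point $(0)$ in $\Xeul_\bk(B)$ is what makes the forward direction immediate.
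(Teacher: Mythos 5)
Your proof is correct and follows exactly the same route as the paper: the forward direction uses that the generic point $(0)$ lies in every nonempty open subset of the irreducible space $\Spec B$ and that $\kappa((0)) = \Frac(B)$, and the reverse direction is precisely the case $R = B$ of Cor.\ \ref{pc9802oi3we0fdwUe}. No issues.
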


\begin{proof}
If $\Xeul_\bk(B)$ has nonempty interior then the generic point of $\Spec B$ is an element of $\Xeul_\bk(B)$, so $\Frac(B)/\bk \in \PE_\bk(B)$.
The converse is the case $R=B$ of Cor.\ \ref{pc9802oi3we0fdwUe}.
\end{proof}

\begin{bigbigremark} \label {d023kHhvjhhJHkru83ee}
Let $\bk$ be a field and $B$ an affine $\bk$-domain such that $\Xeul_\bk(B)$ has nonempty interior.
Then the following are equivalent:
\begin{enumerata}
\item  $B \subseteq \bk^{[ \dim B ]}$ 
\item  $\bk$-rational points are dense in $\Spec B$
\item  the extension $\Frac(B) / \bk$ has the density property
\item  $B$ is unirational over $\bk$.
\end{enumerata}
\end{bigbigremark}

\begin{proof}
Let $n = \dim B$.
By assumption, there exists a dense open subset $U$ of $\Spec B$ such that $U \subseteq \Xeul_\bk(B)$.
Then $\kappa(\pgoth) \otimes_\bk B \subseteq \kappa(\pgoth)^{[n]}$ for all $\pgoth \in U$.
If (b) holds, there exists a $\bk$-rational point $\pgoth \in U$; then $\kappa(\pgoth) = \bk$ and hence $B \subseteq \kk n$,
proving that (b)$\Rightarrow$(a).
Implications (a)$\Rightarrow$(d)$\Rightarrow$(b) are clear
and (c)$\Leftrightarrow$(b) is part of Def.\ \ref{cvcn2o39wscnpwdni}.
\end{proof}

The following example
shows that $\Xeul_\bk(B)$ is not always a constructible subset of $\Spec B$.

\begin{example} \label {pc09vnE230ed9vCf}
Suppose that $\bk$ is a field of characteristic $2$ and that $a \in \bk$ does not have a square root in $\bk$.
Define $B = \bk[X,Y] / (Y^2 + aX^2 +X)$, where $\bk[X,Y] = \kk2$. 
We leave it to the reader to check that for any extension field $K$ of $\bk$,
$$
K/\bk \in \PE_\bk(B) \iff K \otimes_\bk B = K^{[1]} \iff \text{$K$ contains a square root of $a$}.
$$
Consequently, 
$\Xeul_\bk(B) = \setspec{ \pgoth \in \Spec B }{ \text{$\kappa(\pgoth)$ contains a square root of $a$} }$.
Let us argue that both $\Xeul_\bk(B)$ and $\Spec(B) \setminus \Xeul_\bk(B)$ are infinite sets.

Given $\lambda \in \bk$, $( Y^2 + aX^2 +X, (X-\lambda)^2-a )$ is a proper ideal of $\bk[X,Y]$
since the equations  $Y^2 + aX^2 +X = 0$ and $(X-\lambda)^2=a$ have solutions in the algebraic closure of $\bk$.
It follows that there exists a maximal ideal $\mgoth_\lambda$ of $B$ such that $(x-\lambda)^2-a \in \mgoth_\lambda$,
where $x,y \in B$ are the canonical images of $X,Y$.
We have $\mgoth_\lambda \in \Xeul_\bk(B)$, and it is clear that $\lambda \mapsto \mgoth_\lambda$ is injective, so 
$\Xeul_\bk(B)$ is an infinite set ($\bk$ is an infinite field, since it is not perfect).

One can see directly that $\Frac(B) = \bk( y/x )$, so $B$ is rational over $\bk$ and hence $\Spec B$ has infinitely many $\bk$-rational points.
If $\pgoth \in \Spec B$ is a $\bk$-rational point then $\kappa(\pgoth)=\bk$ does not contain a square root of $a$, so $\pgoth \notin \Xeul_\bk(B)$.
So $\Spec(B) \setminus \Xeul_\bk(B)$ is an infinite set.
Note in particular that $\Xeul_\bk(B)$ is not a constructible subset of $\Spec(B)$.
\end{example}

One should note that $\Spec B$, in the above example, is a nontrivial form of the affine line over a field of positive characteristic.
In characteristic zero, it is not known whether $\Xeul_\bk(B)$ is always a constructible subset of $\Spec B$.\footnote{It can be shown
that if $\bk$ is a field of characteristic zero and $B$ is a $1$-dimensional affine $\bk$-domain
then $\Xeul_\bk(B)$ is either empty or equal to $\Spec B$, so in particular $\Xeul_\bk(B)$ is constructible.}
In the following characteristic zero example we are unable to decide whether $\Xeul_\bk(B)$ is constructible, but it seems plausible that it is not.

\begin{example} \label {cp9vnc2o3w9efc0IJ2w}
Let $\bk$ be the field of fractions of the domain $\Reals[u,v] / (u^2 + v^2 + 1)$ (where $\Reals[u,v] = \Reals^{[2]}$)
and let $B = \bk[X,Y,Z] / (X^2 + Y^2 + Z^2 + 1)$ (where $\bk[X,Y,Z] = \bk^{[3]}$). 
One can see that $\bk$ does not contain a square root of $-1$, so $\Reals$ is algebraically closed in $\bk$.
We claim:
\begin{enumerata}

\item $\bk$-rational points are dense in $\Spec B$;

\item $\Xeul_\bk(B)$ is dense in $\Spec B$;

\item $\Xeul_\bk(B)$ is a constructible subset of $\Spec B$ $\iff$ $B \subseteq \kk2$.

\end{enumerata}
To prove (a), observe that given any pair $(a,b) \in \Reals^2$ satisfying $a^2 + b^2 > 1$ the triple
$$
(x,y,z) =  \big( au+bv, bu-av, \sqrt{a^2+b^2-1} \big) \in \bk^3
$$
satisfies $x^2+y^2+z^2+1=0$ and hence determines a $\bk$-rational point of $\Spec B$.
It can be seen that this collection of points is dense in $\Spec B$, so (a) is true.

(b) Write $\kappa(\pgoth) = B_\pgoth / \pgoth B_\pgoth$ for each $\pgoth \in \Spec B$, and note that
\begin{equation} \label {xcW0dn120wdcdhfpqwc9}
\text{if $\kappa(\pgoth)$ contains a square root of $-1$ then $\pgoth \in \Xeul_\bk(B)$.}
\end{equation}
Indeed, if $i \in \kappa(\pgoth)$ satisfies $i^2=-1$ and if we define $X_1 = X+iY$ and $Y_1 = X-iY$
then $\kappa(\pgoth) \otimes_\bk B = \kappa(\pgoth)[X,Y,Z]/(X^2+Y^2+Z^2+1) = \kappa(\pgoth)[X_1,Y_1,Z]/(X_1Y_1+Z^2+1)$
is a ``Danielewski surface'' and hence $\kappa(\pgoth) \otimes_\bk B \subseteq \kappa(\pgoth)^{[2]}$, as is well known.
Then $\pgoth \in \Xeul_\bk(B)$ and \eqref{xcW0dn120wdcdhfpqwc9} is proved.
Consider $R = \Reals[X,Y,Z]/(X^2+Y^2+Z^2+1)$ and observe that $B = \bk \otimes_\Reals R$, 
so the canonical morphism $f : \Spec B \to \Spec R$ is an open map by \cite[Tag 037G]{stacks-project}.
Now let $U$ be a nonempty open subset of $\Spec B$;
then $f(U)$ is a nonempty open subset of $\Spec R$, so in particular there exists a maximal ideal $\mgoth$ of $R$ such that $\mgoth \in f(U)$.
So we may choose $\pgoth \in U$ such that $f(\pgoth) = \mgoth$. Since $R/\mgoth \isom \Comp$ and $\kappa(\pgoth)$ is an extension of $R/\mgoth$,
it follows that $\kappa(\pgoth)$ contains a square root of $-1$
and hence that $\pgoth \in \Xeul_\bk(B)$, by \eqref{xcW0dn120wdcdhfpqwc9}. This proves (b).

(c) If $\Xeul_\bk(B)$ is constructible then, by (b), $\Xeul_\bk(B)$ has nonempty interior;
by (a) and Rem.\ \ref{d023kHhvjhhJHkru83ee}, it follows that $B \subseteq \kk2$.
Conversely, if  $B \subseteq \kk2$ then $\Xeul_\bk(B) = \Spec B$, so $\Xeul_\bk(B)$ is constructible.

Thus (a), (b) and (c) are true.
Because $\bk$ does not contain a square root of $-1$, we cannot imagine how to embed $B$ in  $\kk2$;
in that sense, it seems plausible that $\Xeul_\bk(B)$ is not constructible.
However, we don't know if the condition  $B \subseteq \kk2$ is true, so we don't know if $\Xeul_\bk(B)$ is constructible.
\end{example}

\section{The posets $\Ascr(B)$ and $\Kscr(B)$}
\label {Secrion:TheposetsAscrBandKscrB}

Paragraph \ref{pc9293ed0wdjo03} states some basic facts about locally nilpotent derivations.
For background on this, we refer the reader to any of \cite{VDE:book}, \cite{Freud:Book} or \cite{Dai:IntroLNDs2010}.

\begin{parag} \label {pc9293ed0wdjo03}
Let $B$ be a domain of characteristic zero.
\begin{enumerata}

\item[(i)] Let $D \in \lnd(B)$ and write $A = \ker D$.
Then $A$ is factorially closed in $B$, i.e.,
the implication $xy \in A \Rightarrow x,y \in A$ is true for all $x,y \in B \setminus \{0\}$.
It follows that $A^* = B^*$ and hence that if $K$ is any field included in $B$ then $K\subseteq A$.
Moreover, if $B$ is a UFD then so is $A$.

\item[(ii)] Let $D \in \lnd(B) \setminus \{0\}$ and write $A = \ker D$.
Then there exists $s \in B$ satisfying $D(s) \neq 0$ and $D^2(s)=0$,
and given any such element $s$ we have $B_a = A_a[s] = A_a^{[1]}$, where we set $a = D(s)$.
It follows that if we define $K = \Frac A$ then $B \subseteq K[s] = K^{[1]}$ and $\Frac B = K^{(1)}$ (in particular $K$ is algebraically closed in $\Frac B$).
Moreover, if $0 \neq b \in B \subseteq K[s]$ then the degree of $b$ as a polynomial in $s$ is equal to the greatest
$n \in \Nat$ satisfying $D^n(b) \neq 0$.

\end{enumerata}
\end{parag}

We now recall the definition of the posets  $\Ascr(B)$ and $\Kscr(B)$, which are invariants of the ring $B$.
These objects are defined and studied in \cite{Daigle:StructureRings}.
In the present paper we are mostly interested in $\Kscr(B)$.

\begin{parag} \label {0cd20FXGICHGgt86cjF5i5rgo909}
Let $B$ be a domain of characteristic zero.
Given a subset $\Delta$ of $\lnd(B)$, define
$A_{\Delta} = \bigcap_{D \in \Delta} \ker D$ and 
$K_{\Delta} = \bigcap_{D \in \Delta} \Frac(\ker D)$,
where the first intersection is taken in $B$ and the second in $\Frac B$
(in particular,  $A_\emptyset = B$ and $K_\emptyset = \Frac B$).
Then define the two sets
$$
\Ascr(B) = \setspec { A_{\Delta} }{ \Delta \subseteq \lnd(B)  } 
\quad \text{and} \quad
\Kscr(B) = \setspec { K_{\Delta} }{ \Delta \subseteq \lnd(B)  } .
$$
Note that $\Ascr(B)$ is a nonempty set of subrings of $B$;
$(\Ascr(B), \subseteq)$ is a poset, its greatest element is $B$ and its least element is $\ML(B)$.
 Similarly, $\Kscr(B)$ is a nonempty set of subfields of $\Frac B$ whose
greatest element is $\Frac B$ and whose least element is $\FML(B)$.
\end{parag}

Refer to Section 3 of \cite{Daigle:StructureRings} for the proofs of Lemmas \ref{0ckj238hqn239hnfpawrnHzb}, \ref{f2i3p0f234hef}
and \ref{evdhd838eh65433ru0}.

\begin{lemma} \label {0ckj238hqn239hnfpawrnHzb}
If $B$ is a domain of characteristic zero then
each element of $\Ascr(B)$ is factorially closed in $B$ and 
each element of $\Kscr(B)$ is algebraically closed in $\Frac B$.
In particular, $\FML(B)$ is algebraically closed in $\Frac B$.
\end{lemma}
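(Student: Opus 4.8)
The plan is to reduce everything to the two facts about a single locally nilpotent derivation recorded in paragraph \ref{pc9293ed0wdjo03}, together with the elementary observation that both ``factorially closed in $B$'' and ``algebraically closed in $\Frac B$'' are properties stable under arbitrary intersection.

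First I would treat $\Ascr(B)$. Fix $A = A_\Delta = \bigcap_{D \in \Delta} \ker D$ for some $\Delta \subseteq \lnd(B)$. If $\Delta = \emptyset$ then $A = B$, which is trivially factorially closed, so assume $\Delta \neq \emptyset$. By part (i) of paragraph \ref{pc9293ed0wdjo03}, each $\ker D$ with $D \in \Delta$ is factorially closed in $B$. Now suppose $x, y \in B \setminus \{0\}$ satisfy $xy \in A$. Then $xy \in \ker D$ for every $D \in \Delta$; since $\ker D$ is factorially closed, $x, y \in \ker D$ for every such $D$, hence $x, y \in A$. Thus $A$ is factorially closed in $B$.

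Next I would treat $\Kscr(B)$ by the same pattern. Fix $K = K_\Delta = \bigcap_{D \in \Delta} \Frac(\ker D)$. If $\Delta = \emptyset$ then $K = \Frac B$, which is trivially algebraically closed in itself. Otherwise, for each $D \in \Delta$ the field $\Frac(\ker D)$ is algebraically closed in $\Frac B$: if $D = 0$ this is clear since then $\Frac(\ker D) = \Frac B$, and if $D \neq 0$ it is precisely the assertion of part (ii) of paragraph \ref{pc9293ed0wdjo03}. Now let $\alpha \in \Frac B$ be algebraic over $K$. Since $K \subseteq \Frac(\ker D)$ for each $D \in \Delta$, the element $\alpha$ is algebraic over each $\Frac(\ker D)$; as each of these is algebraically closed in $\Frac B$, we get $\alpha \in \Frac(\ker D)$ for every $D \in \Delta$, so $\alpha \in K$. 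Hence $K$ is algebraically closed in $\Frac B$.

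Finally, $\FML(B) = K_{\lnd(B)}$ is, by paragraph \ref{0cd20FXGICHGgt86cjF5i5rgo909}, the least element of $\Kscr(B)$ and in particular an element of $\Kscr(B)$, so the preceding paragraph shows that it is algebraically closed in $\Frac B$. I do not anticipate any genuine obstacle: the only inputs are the single-derivation statements already packaged in paragraph \ref{pc9293ed0wdjo03}, and the two intersection arguments are routine. The one point worth a moment's care is the case $D = 0$ in the argument for $\Kscr(B)$, which must be handled separately since part (ii) of paragraph \ref{pc9293ed0wdjo03} is stated only for nonzero $D$.
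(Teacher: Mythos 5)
Your proof is correct, and it is the standard argument: the paper itself gives no proof of this lemma (it defers to Section~3 of \cite{Daigle:StructureRings}), but the intended reasoning is exactly the reduction you make, namely that $\ker D$ is factorially closed and $\Frac(\ker D)$ is algebraically closed in $\Frac B$ (the facts recorded in \ref{pc9293ed0wdjo03}), combined with the observation that both properties are stable under arbitrary intersections. Your separate handling of $\Delta=\emptyset$ and of $D=0$ is the right level of care, and the conclusion for $\FML(B)=K_{\lnd(B)}$ follows as you say.
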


\begin{lemma} \label {f2i3p0f234hef}
Let $B$ be a domain of characteristic zero.
\begin{enumerata}

\item Let $D \in \lnd(B)$ and let $D' \in \Der( \Frac B )$ be the unique extension of $D$ to a derivation of $\Frac B$.
Then $\Frac(\ker D) = \ker D'$.

\item For any subset $\Delta$ of $\lnd(B)$, we have $B \cap K_\Delta = A_\Delta$.
Consequently,
$$
\Kscr(B) \to \Ascr(B),\ \ K \mapsto B \cap K,
$$
is a well-defined surjective map and moreover $B \cap \FML(B) = \ML(B)$.

\end{enumerata}
\end{lemma}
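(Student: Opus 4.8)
The plan is to prove part (a) first, because part (b) and all the stated consequences follow from it by purely formal manipulation. For part (a), one inclusion costs nothing: since $D'$ extends $D$ we have $\ker D \subseteq \ker D'$, and $\ker D'$ is the kernel of a derivation of the field $\Frac B$, hence a subfield of $\Frac B$; a subfield containing $\ker D$ contains $\Frac(\ker D)$, so $\Frac(\ker D) \subseteq \ker D'$. The real content is the reverse inclusion $\ker D' \subseteq \Frac(\ker D)$, and this is where I would do the work.

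To handle the reverse inclusion, I would dispose of the case $D=0$ trivially (then $D'=0$, $\ker D' = \Frac B = \Frac(\ker D)$) and assume $D \neq 0$, so as to invoke the slice structure recorded in \ref{pc9293ed0wdjo03}(ii). That gives an $s \in B$ with $a := D(s) \neq 0$ and $D^2(s)=0$, and, writing $K = \Frac(\ker D)$, yields $\Frac B = K(s) = K^{(1)}$ with $s$ transcendental over $K$. Since $\ker D \subseteq \ker D'$ and $\ker D'$ is a field, $K \subseteq \ker D'$, so $D'$ annihilates $K$; moreover $D'(s) = D(s) = a \in K^\times$. Therefore on $\Frac B = K(s)$ the derivation $D'$ is exactly the $K$-derivation $a\,\tfrac{d}{ds}$. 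The hard part reduces to an elementary fact: in characteristic zero the kernel of $\tfrac{d}{ds}$ on the rational function field $K(s)$ is precisely $K$, and scaling by the nonzero constant $a$ does not change the kernel. Hence $\ker D' = K = \Frac(\ker D)$, proving (a).

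For part (b), the inclusion $A_\Delta \subseteq B \cap K_\Delta$ is immediate from the definitions, as $A_\Delta \subseteq B$ and $A_\Delta \subseteq \ker D \subseteq \Frac(\ker D)$ for every $D \in \Delta$. For the reverse inclusion I would take $x \in B \cap K_\Delta$; for each $D \in \Delta$ part (a) gives $x \in \Frac(\ker D) = \ker D'$, and since $x \in B$ with $D'|_B = D$ this forces $D(x) = D'(x) = 0$, i.e. $x \in \ker D$. As this holds for all $D \in \Delta$, we obtain $x \in A_\Delta$, so $B \cap K_\Delta = A_\Delta$.

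The remaining assertions are then formal. Every element of $\Kscr(B)$ has the form $K_\Delta$ for some $\Delta \subseteq \lnd(B)$, and $B \cap K_\Delta = A_\Delta \in \Ascr(B)$; since $B \cap K$ depends only on $K$ as a subset of $\Frac B$, the assignment $K \mapsto B \cap K$ is a well-defined map $\Kscr(B) \to \Ascr(B)$, and it is surjective because each $A_\Delta$ is the image of $K_\Delta$. Taking $\Delta = \lnd(B)$ specializes this to $B \cap \FML(B) = \ML(B)$. The only genuine obstacle is the reverse inclusion in (a): the key realization is that \ref{pc9293ed0wdjo03}(ii) collapses the computation of $\ker D'$ to the constants of $\tfrac{d}{ds}$ on $K(s)$, which are exactly $K$ in characteristic zero.
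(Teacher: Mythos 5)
Your proof is correct. The paper does not supply its own argument for Lemma \ref{f2i3p0f234hef} --- it refers the reader to Section 3 of the cited preprint \cite{Daigle:StructureRings} --- so there is no in-paper proof to compare against; but your route is the natural one and matches the toolkit the paper itself sets up. The one step carrying real content is exactly where you put the weight: using \ref{pc9293ed0wdjo03}(ii) to write $\Frac B = K(s)$ with $K = \Frac(\ker D)$ and $s$ transcendental over $K$, identifying $D'$ with $a\,\tfrac{d}{ds}$ (noting $a = D(s) \in \ker D$ since $D^2(s)=0$, so $a \in K^\times$), and invoking the characteristic-zero fact that the constants of $\tfrac{d}{ds}$ on $K(s)$ are $K$. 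Your observation that well-definedness of $\Kscr(B) \to \Ascr(B)$ is precisely the content of the identity $B \cap K_\Delta = A_\Delta$ (so that $A_\Delta$ depends only on the field $K_\Delta$, not on the presenting set $\Delta$) is also the right way to read that part of the statement.
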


Recall that if $B$ is a $\Rat$-algebra then each $D \in \lnd(B)$ determines an automorphism 
$\exp(D)$ of the ring $B$, defined by $b \mapsto \sum_{i=0}^\infty \frac{D^n(b)}{n!}$ for $b \in B$.
In the case where $B$ is a domain, $\exp(D)$ has a unique extension to an automorphism of $\Frac B$.

\begin{lemma} \label {73npgjk32ncrg93rcn}
Let $\bk$ be a field of characteristic zero, $B$ a $\bk$-domain and $\Delta \subseteq \lnd(B)$.  Then
$$
K_\Delta = \setspec{ \xi \in \Frac B }{ \forall_{D \in \Delta}\ E_D(\xi) = \xi } 
= \setspec{ \xi \in \Frac B }{ \forall_{a \in \bk} \forall_{D \in \Delta}\ E_{a D}(\xi) = \xi } ,
$$
where for each $D \in \lnd(B)$ we let $E_D \in \Aut( \Frac B )$ be the unique extension of $\exp(D) \in \Aut(B)$. 
\end{lemma}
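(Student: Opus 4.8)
The plan is to reduce both claimed equalities to a statement about a single locally nilpotent derivation and then intersect over $\Delta$. Concretely, I would first prove that for each $D \in \lnd(B)$ and each $\xi \in \Frac B$ the following three conditions are equivalent: (i) $\xi \in \Frac(\ker D)$; (ii) $D'(\xi) = 0$, where $D' \in \Der(\Frac B)$ is the extension of $D$; and (iii) $E_D(\xi) = \xi$. Here (i)$\Leftrightarrow$(ii) is exactly Lemma \ref{f2i3p0f234hef}(a), and (ii)$\Rightarrow$(iii) is immediate from the power-series formula $E_D(\xi) = \sum_{n \ge 0} \frac{1}{n!}(D')^n(\xi)$, which collapses to $\xi$ once $D'(\xi) = 0$.

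The substance lies in (iii)$\Rightarrow$(i). For $D = 0$ this is trivial, so I assume $D \ne 0$ and invoke \ref{pc9293ed0wdjo03}(ii): choose $s \in B$ with $D(s) \ne 0$ and $D^2(s) = 0$, put $a = D(s)$ and $K = \Frac(\ker D)$, so that $\Frac B = K(s) = K^{(1)}$. Since $a \in \ker D$ we have $D'(a) = 0$, and since $\ker D' = K$ by Lemma \ref{f2i3p0f234hef}(a) the derivation $D'$ vanishes on $K$; thus $D'$ is the $K$-derivation $a\,d/ds$ of $K(s)$, and $E_D = \exp(D')$ fixes $K$ pointwise while sending $s \mapsto s + a$. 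Hence $E_D$ is precisely the shift $K$-automorphism of $K(s)$, and (iii)$\Rightarrow$(i) becomes the assertion that the fixed field of $s \mapsto s+a$ (with $a \in K^*$) is $K$. Writing a fixed element as $p/q$ with $p,q \in K[s]$ coprime, shift-invariance gives $p(s+a)q(s) = p(s)q(s+a)$; comparing coprime factorizations in the UFD $K[s]$ and then leading coefficients forces $p(s+a) = p(s)$ and $q(s+a) = q(s)$. A nonconstant $p$ cannot satisfy this in characteristic zero, since $p(s+a) - p(s)$ has leading term $\deg(p)\,a\,c\,s^{\deg(p)-1}$ (with $c$ the leading coefficient of $p$), which is nonzero. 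So $p,q \in K$ and $\xi \in K = \Frac(\ker D)$.

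With the single-derivation equivalence established, the first equality is immediate: by definition $K_\Delta = \bigcap_{D \in \Delta} \Frac(\ker D)$, and since each $\Frac(\ker D)$ equals $\{\xi \in \Frac B : E_D(\xi) = \xi\}$ by (i)$\Leftrightarrow$(iii), intersecting over $\Delta$ yields the middle set. For the second equality I would first note that each $aD$ with $a \in \bk$ and $D \in \Delta$ is again locally nilpotent: indeed $a \in \bk \subseteq \ker D$ by \ref{pc9293ed0wdjo03}(i), so $(aD)^n = a^n D^n$. For $a \ne 0$ one has $\ker(aD) = \ker D$, hence $\{\xi : E_{aD}(\xi) = \xi\} = \Frac(\ker D)$ by the single-derivation result applied to $aD$; for $a = 0$ the condition $E_{aD}(\xi) = \xi$ is vacuous. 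Therefore the right-hand set equals $\bigcap_{D \in \Delta}\bigcap_{a \in \bk^*}\Frac(\ker D) = \bigcap_{D \in \Delta}\Frac(\ker D) = K_\Delta$, which identifies it with the first two sets.

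The main obstacle is the implication (iii)$\Rightarrow$(i). The condition $E_D(\xi)=\xi$ records only invariance under the cyclic group $\langle E_D\rangle$, which is a priori weaker than membership in $\ker D'$; the characteristic-zero hypothesis is exactly what bridges this gap, since it makes the translation $s \mapsto s+a$ have infinite order and forces every shift-invariant rational function in $K(s)$ to be constant, so that the ``integrated'' fixed set of $E_D$ and the ``infinitesimal'' kernel of $D'$ coincide.
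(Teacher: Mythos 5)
Your proof is correct and follows essentially the same route as the paper: both reduce to showing $\Frac(\ker D)=\setspec{\xi \in \Frac B}{E_D(\xi)=\xi}$ for a single $D\neq 0$, use \ref{pc9293ed0wdjo03}(ii) to realize $E_D$ as a translation on $K(s)$ over $K=\Frac(\ker D)$ (the paper first localizes to get a slice and translation by $1$, you use a preslice and translation by $a=D(s)$), and then identify the fixed field of that translation with $K$ --- a computation the paper leaves as an exercise and you carry out. One cosmetic point: $E_D$ on $\Frac B$ is not literally given by the series $\sum_{n\ge 0}\frac{1}{n!}(D')^n(\xi)$ (which need not terminate for general $\xi\in\Frac B$), but for $\xi\in\ker D'=\Frac(\ker D)$ the conclusion $E_D(\xi)=\xi$ follows at once by writing $\xi$ as a ratio of elements of $\ker D$, each fixed by $\exp(D)$.
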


\begin{remark}
This can be written as $K_\Delta = (\Frac B)^G = (\Frac B)^{G'}$, where $G$ (resp.\ $G'$) denotes the subgroup of $\Aut( \Frac B )$ 
generated by $\setspec{ E_D }{ D \in \Delta }$ (resp.\ by $\setspec{ E_{aD} }{ \text{$a \in \bk$ and $D \in \Delta$} }$).
\end{remark}

\begin{proof}[Proof of Lemma \ref{73npgjk32ncrg93rcn}]
It suffices to prove that, for each $D \in \lnd(B)$,
$$
\Frac( \ker D ) =  \setspec{ \xi \in \Frac B }{ E_D(\xi) = \xi } .
$$
Moreover, we may assume that $D \neq 0$.
Let $A = \ker D$, $S = A \setminus \{0\}$, and $K=S^{-1}A$.
Then $S^{-1}D \in \lnd( S^{-1}B )$ and there exists $t \in S^{-1}B$ such that $(S^{-1}D)(t)=1$.
By \ref{pc9293ed0wdjo03}, $S^{-1}B = K[t] = K^{[1]}$ and $S^{-1}D = \frac{d}{dt}$.
Now $\exp(D) \in \Aut(B)$ extends to $\exp( \frac{d}{dt} ) \in \Aut( K[t] )$, which is the
$K$-automorphism of $K[t]$ that sends $t$ to $t+1$. Consequently, we have $\Frac B = K(t)$ and $E_D : K(t) \to K(t)$ is the
$K$-automorphism that sends $t$ to $t+1$.
We leave it as an exercise to check that $\setspec{ \xi \in K(t) }{ E_D(\xi) = \xi }$ is equal to $K$.
This proves the Lemma.
\end{proof}

Next, we consider how $\Kscr(B)$ behaves under extension of the base field.

\begin{parag} \label {Wjd9f23u8n42ndhje8}
Let $B$ be an algebra over a field $\bk$ of characteristic zero and let $D \in \lnd(B)$. 
Let $\ck$ be any field extension of $\bk$ and define $\bar B = \ck \otimes_\bk B$. 
Applying the functor $\ck \otimes_\bk (\underline{\ \ }) : \text{\rm $\bk$-Mod} \to \text{\rm $\ck$-Mod}$ to $D$
gives a $\ck$-linear map $\bar D : \bar B \to \bar B$, given by $\bar D( \lambda \otimes b ) = \lambda \otimes D(b)$
for all $\lambda \in \ck$ and $b \in B$. It is easily verified that $\bar D \in \lnd( \bar B )$, so we have a well-defined set map
$D \mapsto \bar D$ from $\lnd(B)$ to $\lnd(\bar B)$.
If $D \in \lnd(B)$ and $A = \ker D$, then $\ker(\bar D) = \ck \otimes_\bk A$ because
$\ck \otimes_\bk (\underline{\ \ })$ is an exact functor.
\end{parag}

\begin{lemma} \label {evdhd838eh65433ru0}
Let $\ck/\bk$ be an algebraic extension of fields of characteristic zero.
Let $B$ be an affine $\bk$-domain satisfying $\FML(B)=\bk$,
and define $\bar B = \ck \otimes_\bk B$.
\begin{enumerata}

\item  $\bar B$ is an affine $\ck$-domain satisfying $\FML(\bar B)=\ck$ and $\dim(\bar B) = \dim B$.

\item Each $D \in \lnd(B)$ has a unique extension $\bar D \in \lnd(\bar B)$.
Every subset $\Delta$ of $\lnd(B)$ determines a subset $\bar\Delta$ of $\lnd(\bar B)$ defined by $\bar \Delta = \setspec{ \bar D }{ D \in \Delta }$.
We have
$$
\ck \otimes_\bk K_\Delta = K_{\bar\Delta} \in \Kscr(\bar B) \quad \text{for every subset $\Delta$ of $\lnd(B)$.}
$$
In particular, for each $K \in \Kscr(B)$ we have $\ck \otimes_\bk K \in \Kscr(\bar B)$. 

\item The map
$$
\begin{array}{rcl}
\Kscr(B)    &    \to    &    \Kscr( \bar B )   \\[1mm]
K  &  \mapsto & \ck \otimes_\bk K
\end{array}
$$
is injective and preserves transcendence degree:
$$
\trdeg( \Frac(B) : K ) = \trdeg( \Frac(\bar B) : \ck \otimes_\bk K ) \quad \text{for all $K \in \Kscr(B)$.}
$$

\end{enumerata}
\end{lemma}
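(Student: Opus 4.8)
The plan is to establish (a), (b), (c) roughly in that order, but to defer the computation of $\FML(\bar B)$ inside (a) until the key identity of (b) is available, since the former follows cleanly from the latter. For (a), finite generation of $\bar B$ over $\ck$ is immediate and $\dim \bar B = \dim B$ is exactly Lemma \ref{0si1heazd93g9w8las0}(a); the only substantive point is that $\bar B$ is a domain. Here I would argue that $\FML(B)=\bk$ forces $\bk$ to be algebraically closed in $\Frac B$: by Lemma \ref{0ckj238hqn239hnfpawrnHzb} the field $\FML(B)=\bk$ is algebraically closed in $\Frac B$. Since $\Char\bk=0$, the extension $\Frac(B)/\bk$ is then regular, so $B$ is geometrically integral and $\bar B = \ck\otimes_\bk B$ is a domain.

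For (b), uniqueness of the extension follows from \ref{pc9293ed0wdjo03}(i): any $E\in\lnd(\bar B)$ contains the field $\ck$ in its kernel, hence is $\ck$-linear and therefore determined by its values on the $\ck$-algebra generators $\{1\otimes b\}$; so if $E$ restricts to $D$ on $B$ then $E=\bar D$. The heart of (b) is the identity $\Frac(\ker\bar D)=\ck\otimes_\bk\Frac(\ker D)$ for each $D$. Writing $A=\ker D$, paragraph \ref{Wjd9f23u8n42ndhje8} gives $\ker\bar D=\ck\otimes_\bk A$. Now $\Frac A$ equals $K_{\{D\}}\in\Kscr(B)$, which is algebraically closed in $\Frac B$ by Lemma \ref{0ckj238hqn239hnfpawrnHzb}, so $\bk$ is algebraically closed in $\Frac A$ and $\Frac(A)/\bk$ is regular; thus $\ck\otimes_\bk\Frac A$ is a domain, and being integral over the field $\Frac A$ (as $\ck/\bk$ is algebraic) it is a field. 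Because localization commutes with base change, $\ck\otimes_\bk\Frac A$ is a localization of the domain $\ck\otimes_\bk A$ at $\{1\otimes s : s\in A\setminus\{0\}\}$; a localization of a domain that happens to be a field must equal its full fraction field, so $\Frac(\ker\bar D)=\ck\otimes_\bk\Frac A$. Finally, fixing a $\bk$-basis of $\ck$ containing $1$ shows that $\ck\otimes_\bk(\underline{\ \ })$ commutes with arbitrary intersections of $\bk$-subspaces of $\Frac B$; applying this to $K_\Delta=\bigcap_{D\in\Delta}\Frac(\ker D)$ yields $K_{\bar\Delta}=\bigcap_{D\in\Delta}\Frac(\ker\bar D)=\ck\otimes_\bk K_\Delta$, which lies in $\Kscr(\bar B)$ by definition \ref{0cd20FXGICHGgt86cjF5i5rgo909}. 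Taking $\Delta=\lnd(B)$ now completes (a): since $\bar\Delta\subseteq\lnd(\bar B)$, we get $\ck\subseteq\FML(\bar B)\subseteq K_{\bar\Delta}=\ck\otimes_\bk\FML(B)=\ck$.

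For (c), the same basis-of-$\ck$ computation recovers $K$ from $\ck\otimes_\bk K$ as the intersection $(\ck\otimes_\bk K)\cap(1\otimes\Frac B)$, which gives injectivity of $K\mapsto\ck\otimes_\bk K$. For the transcendence degree I would first note that $\ck\otimes_\bk K$ is a field, by the regularity argument of the previous paragraph applied to $K$ (using that $\bk$ is algebraically closed in $K\subseteq\Frac B$). Then I would apply Lemma \ref{0cv9n2w0dZw0dI28efydldc0}(b) to the tensor square with $R=K$, $S=\Frac B$, $T=\ck\otimes_\bk K$, for which $S\otimes_R T=\ck\otimes_\bk\Frac B=\Frac\bar B$ and all four rings are domains with injective structure maps; a transcendence basis of $\Frac B$ over $K$ then maps to a transcendence basis of $\Frac\bar B$ over $\ck\otimes_\bk K$, so the two transcendence degrees coincide.

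The step I expect to be the main obstacle is the identity $\Frac(\ck\otimes_\bk A)=\ck\otimes_\bk\Frac A$, together with the commutation of $\ck\otimes_\bk(\underline{\ \ })$ with intersections. Everything reduces to knowing that the relevant $\bk$-domains and intermediate fields are geometrically integral, and this is precisely the point at which the hypothesis $\FML(B)=\bk$ is indispensable, entering through Lemma \ref{0ckj238hqn239hnfpawrnHzb} and the characteristic-zero regularity criterion; the remaining verifications (uniqueness of $\bar D$, injectivity, and the transcendence-degree bookkeeping) are then essentially formal.
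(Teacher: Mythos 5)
The paper does not actually prove this lemma: it is one of the three statements for which the reader is referred to Section 3 of \cite{Daigle:StructureRings}, so there is no in-paper proof to compare against. Your argument is, as far as I can check, a correct and self-contained proof using only tools available in the paper: the reductions to Lemma \ref{0ckj238hqn239hnfpawrnHzb} (to get $\bk$, and likewise each $\Frac(\ker D)$, algebraically closed in $\Frac B$, hence regularity in characteristic zero and geometric integrality), the identification $\ker\bar D=\ck\otimes_\bk\ker D$ from \ref{Wjd9f23u8n42ndhje8}, the observation that a localization of a domain which is already a field equals the full fraction field (giving $\Frac(\ck\otimes_\bk A)=\ck\otimes_\bk\Frac A$), the basis-of-$\ck$ argument for commuting $\ck\otimes_\bk(\underline{\ \ })$ with arbitrary intersections and for injectivity in (c), and Lemma \ref{0cv9n2w0dZw0dI28efydldc0}(b) for the transcendence degree, are all sound, and deferring $\FML(\bar B)=\ck$ until after the identity $K_{\bar\Delta}=\ck\otimes_\bk K_\Delta$ is a clean way to organize it. The only point worth flagging is that where you invoke Lemma \ref{0si1heazd93g9w8las0}(a) and the regularity criterion you are implicitly using Lemma \ref{0c9v902j3we0fcwpdjhf2873}, which the paper also states without proof; this is consistent with the paper's own level of detail.
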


\section{Rings having trivial FML-invariant}
\label {RingshavingtrivialFMLinvariant}

\begin{definition}
Let $\ck$ be the algebraic closure of a field $\bk$.
An affine $\bk$-domain $B$ is said to be \textit{geometrically rational} (resp.\ \textit{geometrically unirational}) over $\bk$
if $\ck \otimes_\bk B$ is a domain and the field extension $\Frac(\ck \otimes_\bk B ) / \ck$ is rational (resp.\ unirational).
\end{definition}

The following is a straightforward consequence of the Unirationality Theorem (stated in the introduction) and of Lemma \ref{evdhd838eh65433ru0}:

\begin{corollary} \label {9132g8rf7rhf}
Let $\bk$ be a field of characteristic zero and $B$ an affine $\bk$-domain satisfying \mbox{$\FML(B) = \bk$.}
Then $B$ is geometrically unirational over $\bk$.
\end{corollary}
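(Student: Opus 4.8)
The plan is to reduce Corollary \ref{9132g8rf7rhf} to the Unirationality Theorem by passing to the algebraic closure $\ck$ and checking that the relevant hypotheses propagate across this base change. The target is the statement that $\ck \otimes_\bk B$ is a domain whose field of fractions is unirational over $\ck$. Since the Unirationality Theorem is stated precisely for an algebraically closed field and an affine domain with trivial $\FML$-invariant, the whole task is to verify that $\bar B := \ck \otimes_\bk B$ satisfies those hypotheses.

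First I would invoke Lemma \ref{evdhd838eh65433ru0}, applied to the algebraic extension $\ck/\bk$. Part (a) of that lemma states exactly that when $B$ is an affine $\bk$-domain with $\FML(B) = \bk$, the ring $\bar B = \ck \otimes_\bk B$ is an affine $\ck$-domain satisfying $\FML(\bar B) = \ck$ (and $\dim \bar B = \dim B$, though the dimension is not needed here). In particular $\bar B$ is a domain, which disposes of the first half of the definition of geometric unirationality.

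Next I would apply the Unirationality Theorem to $\bar B$ over the algebraically closed field $\ck$: since $\bar B$ is an affine $\ck$-domain with $\FML(\bar B) = \ck$, the theorem yields that $\bar B$ is unirational over $\ck$, i.e. the field extension $\Frac(\bar B)/\ck$ is unirational. By the definition of geometric unirationality, this is precisely the assertion that $B$ is geometrically unirational over $\bk$, completing the proof.

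The whole argument is essentially a one-line citation assembly, so there is no genuine obstacle at the level of this corollary; the real content has been packaged into Lemma \ref{evdhd838eh65433ru0}(a). If anything, the only point requiring care is that the Unirationality Theorem requires characteristic zero and an algebraically closed base, both of which hold for $\ck$ since $\ck$ is the algebraic closure of the characteristic-zero field $\bk$; and that the descent of the $\FML$-invariant across $\ck/\bk$ (the equality $\FML(\bar B) = \ck$) is already guaranteed by the cited lemma rather than something to be re-proved here.
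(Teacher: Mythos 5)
Your proposal is correct and follows exactly the same route as the paper: apply Lemma \ref{evdhd838eh65433ru0}(a) to obtain that $\bar B = \ck \otimes_\bk B$ is an affine $\ck$-domain with $\FML(\bar B) = \ck$, then invoke the Unirationality Theorem over $\ck$. Nothing is missing.
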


\begin{proof}
Consider the algebraic closure $\ck$ of $\bk$.
By Lemma \ref{evdhd838eh65433ru0}, $\bar B = \ck \otimes_\bk B$ is an affine $\ck$-domain satisfying $\FML(\bar B) = \ck$.
By the Unirationality Theorem, it follows that $\bar B$ is unirational over $\ck$.
\end{proof}

Our objective for the rest of this section
is to show that if $\FML(B)=\bk$ then $B$ satisfies a condition stronger than geometric unirationality.
This is achieved in Thm \ref{9i3oerXfvdf93p04efeJ} (see also Rem.\ \ref{p0c9v023we9f0d}).

\begin{notations} \label {iefocXo23wiedn}
Given an algebra $B$ over a field $\bk$ of characteristic zero
and a finite sequence $\Seul = (D_1, \dots, D_N)$  of elements of $\lnd(B)$
(where $N\ge1$ and where $D_1,\dots,D_N$ are not necessarily distinct),
we proceed to define a $\bk$-homomorphism $\Psi_\Seul : B \to B[X]$, where $B[X]$ is the polynomial ring $B[X_1,\dots,X_N]= B^{[N]}$.

For each $i \in \{1,\dots,N\}$, let $\delta_i \in \lnd( B[X_1,\dots,X_N] )$ be the unique extension of $D_i$ such that $\delta_i(X_j)=0$ for all $j \in \{1, \dots, N\}$,
note that $X_i\delta_i \in \lnd( B[X_1,\dots,X_N] )$,
and let $\epsilon_i = \exp( X_i \delta_i ) \in \Aut_\bk( B[X_1,\dots,X_N] )$.
Define $\Psi_\Seul : B \to B[X_1,\dots,X_N] $ to be the composition
$$
B \hookrightarrow B[X_1,\dots,X_N] \xrightarrow{ \epsilon_N \circ \cdots \circ \epsilon_1 } B[X_1,\dots,X_N] 
$$
and note that $\Psi_\Seul$ is explicitly given by
\begin{equation}  \label {xncb2oi3qwdowes}
\Psi_\Seul(b) =
\sum_{(i_1, \dots, i_N) \in \Nat^N}
\left( \textstyle \frac{(D_N^{i_N} \circ \cdots \circ D_1^{i_1})(b)}{i_1 ! \cdots i_N !}  \right) X_1^{i_1} \cdots X_N^{i_N} 
\quad \text{for all $b \in B$.}
\end{equation}

By the special case $R=B$ of Notations \ref{90q3985yTghvj29hvnr},
$\Psi_\Seul$ determines for each $\pgoth \in \Spec B$ 
\begin{itemize}

\item a $\bk$-homomorphism $\Psi_\Seul^\pgoth : B \to \kappa(\pgoth)[X]$
\item and a $\kappa(\pgoth)$-homomorphism $\hat\Psi_\Seul^\pgoth : \kappa(\pgoth) \otimes_\bk B \to \kappa(\pgoth)[X]$,
\end{itemize}
where $\kappa(\pgoth) = B_\pgoth/\pgoth B_\pgoth$ and $\kappa(\pgoth)[X] =  \kappa(\pgoth)[X_1,\dots,X_N]= \kappa(\pgoth)^{[N]}$.
Let us recall how these homomorphisms are defined.
Let $\phi_\pgoth : B \to \kappa(\pgoth)$ be the canonical homomorphism
and let $\tilde\phi_\pgoth : B[X] \to \kappa(\pgoth)[X]$ be the unique extension of $\phi_\pgoth$
satisfying $\tilde\phi_\pgoth(X_i)=X_i$ for all $i$. 
Then let $\Psi_\Seul^\pgoth$ be the composition
$$
B \xrightarrow{ \Psi_\Seul } B[X_1, \dots, X_N]  \xrightarrow{ \tilde\phi_\pgoth } \kappa(\pgoth)[X_1, \dots, X_N] 
$$
and note that
\begin{equation}  \label {o3874t5ftc3945h7r8f}
\Psi_\Seul^\pgoth (b) =
\sum_{(i_1, \dots, i_N) \in \Nat^N}
\phi_\pgoth\left( \textstyle \frac{(D_N^{i_N} \circ \cdots \circ D_1^{i_1})(b)}{i_1 ! \cdots i_N !}  \right) X_1^{i_1} \cdots X_N^{i_N} 
\quad \text{for all $b \in B$.}
\end{equation}
The $\kappa(\pgoth)$-homomorphism $\hat\Psi_\Seul^\pgoth$ is defined via the pushout square \eqref{892bf9jHGTiguYTFUKHiu73}; it satisfies
$$
\begin{array}{rcl}
\hat\Psi_\Seul^\pgoth \, : \, \kappa(\pgoth) \otimes_\bk B & \longrightarrow  & \kappa(\pgoth)[X_1, \dots, X_N] . \\
\lambda \otimes b \   & \longmapsto & \ \lambda \Psi_\Seul^\pgoth(b)
\end{array}
$$
\end{notations}

\begin{notations} 
\begin{enumerate}
\item If $\Psi : R \to S$ is a ring homomorphism, we write $\Psi^* : \Spec S \to \Spec R$ for the morphism of schemes determined by $\Psi$.
\end{enumerate}
Let $\bk$ be an algebraically closed field of characteristic zero, $B$ an affine $\bk$-domain, and $X = \Spec B$.
\begin{enumerate}
\addtocounter{enumi}{1}

\item If $\Seul = (D_1, \dots, D_N)$ is a finite sequence of elements of $\lnd(B)$,
we consider the morphism of $\bk$-varieties $\Psi_\Seul^* : \aff^N \times X \to X$ determined by the $\bk$-homomorphism 
$\Psi_\Seul : B \to B[X_1,\dots,X_N] $ defined in \ref{iefocXo23wiedn}.

\item
Let $\Delta$ be a subset of $\lnd(B)$.
For each  $\lambda \in \bk$ and $D \in \Delta$, we have $\exp(\lambda D) \in \Aut_\bk(B)$ and hence $\exp(\lambda D)^* \in \Aut(X)$.
Let $G_\Delta$ be the subgroup of $\Aut(X)$
generated by the set $\setspec{ \exp(\lambda D)^* }{ \lambda \in \bk,\ D \in \Delta }$.
For each closed point $x \in X$, let $G_\Delta(x) \subseteq X$ denote the orbit of $x$ with respect to the natural action of $G_\Delta$ on $X$.

\end{enumerate}
\end{notations}

\begin{lemma}  \label {fh939fg2730fikqzzzzz}
Let $\bk$ be an algebraically closed field of characteristic zero, $B$ an affine $\bk$-domain,
and $\Delta$ a subset of $\lnd(B)$.  Let $X = \Spec B$.
\begin{enumerata}

\item There exists a finite sequence $\Seul = (D_1, \dots, D_N)$ of elements of $\Delta$ such that the morphism
$\Psi_\Seul^* : \aff^N \times X \to X$ has the following property:
\begin{center}
For every closed point $x \in X$,\ \  $\Psi_\Seul^* ( \aff^N \times \{x\} ) = G_\Delta(x)$.
\end{center}

\item There exists a nonempty Zariski-open subset $U$ of $X$ such that, for every closed point $x \in U$,
the dimension of $G_\Delta(x)$ is equal to the transcendence degree of $\Frac B$ over $K_\Delta$.

\end{enumerata}
\end{lemma}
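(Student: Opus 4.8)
The plan is to first make the morphism $\Psi_\Seul^*$ completely explicit on closed points, and then prove the two assertions in turn, with (a) feeding directly into (b). Starting from the explicit formula \eqref{xncb2oi3qwdowes} for $\Psi_\Seul$, I would evaluate at a closed point $(\mathbf{t},x)=(t_1,\dots,t_N;x)$ of $\aff^N\times X$. Since
$\sum_{(i_1,\dots,i_N)} \frac{(D_N^{i_N}\circ\cdots\circ D_1^{i_1})(b)}{i_1!\cdots i_N!}\,t_1^{i_1}\cdots t_N^{i_N}=\big(\exp(t_ND_N)\circ\cdots\circ\exp(t_1D_1)\big)(b)$,
the image point $\Psi_\Seul^*(\mathbf{t},x)$ is exactly $g_{\mathbf{t}}(x)$, where $g_{\mathbf{t}}=\exp(t_1D_1)^*\circ\cdots\circ\exp(t_ND_N)^*$ is a product of the one-parameter elements generating $G_\Delta$. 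In particular $\Psi_\Seul^*(\aff^N\times\{x\})\subseteq G_\Delta(x)$ for every sequence $\Seul$ with entries in $\Delta$ and every closed point $x$; the content of (a) is the reverse inclusion, made uniform in $x$, for a single well-chosen $\Seul$.

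For (a) I would treat all words at once. For each finite sequence $\Seul$ of elements of $\Delta$, set $\Phi_\Seul\colon\aff^N\times X\to X\times X$, $(\mathbf{t},x)\mapsto(x,\Psi_\Seul^*(\mathbf{t},x))$, and let $Z_\Seul=\image\Phi_\Seul$, which is a constructible subset of $X\times X$ by Chevalley's theorem and whose fibre over $x$ is $\{x\}\times\Psi_\Seul^*(\aff^N\times\{x\})$. Because $\exp(0\cdot D)=\id$, inserting extra letters and setting their parameters to $0$ shows $Z_\Seul\subseteq Z_{\Seul'}$ whenever $\Seul$ is a subsequence of $\Seul'$; hence the family $\{Z_\Seul\}$ is directed (any two sequences embed in their concatenation) and satisfies $Z_{\Seul_1}\circ Z_{\Seul_2}\subseteq Z_{\Seul_1\Seul_2}$ as relations. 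Its union is the orbit relation $R_\Delta=\setspec{(x,y)}{y\in G_\Delta(x)}$, which is stable under the diagonal action of $G_\Delta$. Since $X\times X$ is Noetherian, the closures $\overline{Z_\Seul}$ stabilize, by directedness, at an irreducible set $\overline{Z_{\Seul_0}}=\overline{R_\Delta}$, so $Z_{\Seul_0}$ contains a dense open subset $V$ of $\overline{R_\Delta}$. The remaining step — upgrading this generic statement to the exact equality $Z_\Seul=R_\Delta$ at \emph{every} closed point, including the non-generic orbits — is the genuinely hard part: one removes $V$, uses the diagonal $G_\Delta$-invariance of $R_\Delta$ to realize the lower-dimensional remainder as an orbit relation of the same type over a proper closed invariant subvariety, and recurses on dimension, enlarging $\Seul$ and composing relations at each stage. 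This uniform-capture step is where I expect the main obstacle, and it is precisely the point at which the structural results on algebraically generated groups from \cite{Popov_InfiniteDimAlgGps2014} are invoked.

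For (b) I would fix the sequence $\Seul$ produced by (a), so that $\Psi_\Seul^*(\aff^N\times\{x\})=G_\Delta(x)$ for every closed point $x$, and reconsider $\Phi_\Seul\colon\aff^N\times X\to X\times X$. Its image closure $Z=\overline{R_\Delta}$ is irreducible (it is the closure of the image of the irreducible $\aff^N\times X$), and the first projection $\pi_1\colon Z\to X$ is dominant because $R_\Delta$ contains the diagonal. By the theorem on dimension of fibres there is a dense open $U\subseteq X$ on which every fibre of $\pi_1$ has dimension $\dim Z-\dim X$; shrinking $U$ so that $V$ stays dense in each fibre, I obtain $\dim G_\Delta(x)=\dim Z-\dim X$ for all closed $x\in U$. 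It then remains to identify this number with $\trdeg(\Frac B:K_\Delta)$. Here I would use Lemma \ref{73npgjk32ncrg93rcn}: the rational functions on $X$ fixed by $G_\Delta$ are exactly $(\Frac B)^{G'}=K_\Delta$, and $K_\Delta$ is algebraically closed in $\Frac B$ by Lemma \ref{0ckj238hqn239hnfpawrnHzb}. Since these invariants are constant along orbits, the orbits lie in the fibres of the rational quotient $X\dashrightarrow Y$ with $\bk(Y)=K_\Delta$, giving $\dim G_\Delta(x)\le\trdeg(\Frac B:K_\Delta)$ generically; Rosenlicht's theorem that rational invariants separate generic orbits supplies the reverse inequality, whence $\dim Z-\dim X=\trdeg(\Frac B:K_\Delta)$ and (b) follows.
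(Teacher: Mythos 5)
Your proposal is correct in substance and ends up resting on exactly the same external results as the paper: the paper's proof of (a) consists of verifying that the families $\{\Psi_\Seul^*(t,\underline{\ \ })\}_{t\in\aff^N}$ are precisely the families ``derived from'' the generating collection $\{F_D\}_{D\in\Delta}$ in Popov's sense and then citing Theorem~1 of \cite{Popov_InfiniteDimAlgGps2014} for the existence of an exhaustive derived family, which is where you also land after setting up the relations $Z_\Seul$. Two cautions. First, your sketched dimension recursion for the uniform-capture step is not close to a proof as written: after removing the dense open $V$ from $\overline{R_\Delta}$, the remainder is \emph{not} an orbit relation of the same type over a closed invariant subvariety, because orbits meeting $V$ still leave lower-dimensional tails outside $V$; this is exactly the difficulty Popov's Theorem~1 resolves, so the step is closed by citation, not by your recursion. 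Second, for (b) the paper cites Popov's Theorem~2 (generic constancy of orbit dimension) and Corollary~2 (that this dimension equals $\dim X-\trdeg(\bk(X)^{G_\Delta}:\bk)$), combined with Lemma~\ref{73npgjk32ncrg93rcn} to identify $\bk(X)^{G_\Delta}$ with $K_\Delta$; you make the same fixed-field identification but appeal to ``Rosenlicht's theorem,'' which as usually stated concerns algebraic group actions, whereas $G_\Delta$ is not an algebraic group. The statement you need in this generality is Popov's Corollary~2 (alternatively, part~(a) lets you run a Rosenlicht-type argument directly on the image of the morphism $\Phi_\Seul$ from the irreducible variety $\aff^N\times X$). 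With those citations made precise, your argument coincides with the paper's.
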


This result is a corollary of Thm 1, Thm 2 and Cor.\ 2 of \cite{Popov_InfiniteDimAlgGps2014}.
The proof below explains how to use Popov's results to obtain Lemma \ref{fh939fg2730fikqzzzzz}.
One should read the definitions given on pages 551--552 of \cite{Popov_InfiniteDimAlgGps2014} before reading
this proof.

\begin{proof}[Proof of Lemma \ref{fh939fg2730fikqzzzzz}]
If $D \in \Delta$ and $\lambda \in \bk$ then $\exp(\lambda D) \in \Aut_\bk(B)$ and hence $\exp(\lambda D)^* \in \Aut(X)$.
Let us write $F_D = \{ \exp(\lambda D)^* \}_{ \lambda \in \bk } = \{ \exp(\lambda D)^* \}_{ \lambda \in \aff^1 }$,
then $F_D$ is a unital algebraic family in $\Aut(X)$ (this is defined on page 551 of  \cite{Popov_InfiniteDimAlgGps2014}). 
Thus, $\mathcal{I}_\Delta = \setspec{ F_D }{ D \in \Delta }$ is a collection of 
unital algebraic families in $\Aut(X)$ and $G_\Delta$ is generated by $\mathcal{I}_\Delta$.
It then follows from Lemma 1 of \cite{Popov_InfiniteDimAlgGps2014} that $G_\Delta$ is a connected subgroup of $\Aut(X)$
(this notion is defined in the second paragraph of page 552 of \cite{Popov_InfiniteDimAlgGps2014}). 

Note that if $\Seul = (D_1,\dots,D_N)$ is a finite sequence of elements of $\Delta$ then the morphism
$\Psi_\Seul^* : \aff^N \times X \to X$ is such that $\Psi_\Seul^*(t,\underline{\ \ }) \in \Aut(X)$ for each $t \in \aff^N$,
so if we define $F_\Seul = \{ \Psi_\Seul^*(t,\underline{\ \ }) \}_{t \in \aff^N}$ then 
$F_\Seul$ is a unital algebraic family in $\Aut(X)$.
Moreover, a moment's reflexion shows that
$ \setspec{ F_\Seul }{ \text{$\Seul$ is a finite sequence of elements of $\Delta$} } $
is precisely the set of all families in $\Aut(X)$ that are derived from $\mathcal{I}_\Delta$ (this concept is defined in
the last paragraph of page 551 of  \cite{Popov_InfiniteDimAlgGps2014}).

By Thm 1 of \cite{Popov_InfiniteDimAlgGps2014},
there exists a family derived from $\mathcal{I}_\Delta$ and exhaustive for the natural action of $G_\Delta$ on $X$
(defined on page 552). This proves assertion (a) of the Lemma.

Note that if we identify the function field $\bk(X)$ of $X$ with $\Frac(B)$,
then Lemma \ref{73npgjk32ncrg93rcn} implies that $\bk(X)^{ G_\Delta } = K_\Delta$.
Thus 
\begin{equation} \label {9r8f67382efe}
\trdeg\big( \Frac(B) : K_\Delta \big) = \trdeg \big( \bk(X) : \bk(X)^{G_\Delta} \big).
\end{equation}
So we may apply Thm 2 and Cor.\ 2 of \cite{Popov_InfiniteDimAlgGps2014} as follows:
\begin{itemize}

\item[(i)] By Thm 2, there exist an $m \in \Nat$ and a nonempty open subset $U$ of $X$ such that, for every closed point $x \in U$,
the dimension of $G_\Delta(x)$ is equal to $m$.

\item[(ii)] By Cor.\ 2,
the transcendence degree of $\bk(X)^{G_\Delta}$ over $\bk$ is equal to $\dim X - m$, where $m$ is the same as in (i).

\end{itemize}
This gives $m = \dim X - \trdeg\big( \bk(X)^{G_\Delta} : \bk \big)
= \trdeg \big( \bk(X) : \bk(X)^{G_\Delta} \big)$,
which is equal to $\trdeg\big( \Frac(B) : K_\Delta \big)$ by \eqref{9r8f67382efe}.
This proves assertion (b) of the Lemma.
\end{proof}

The following is an intermediate result that will be improved in Thm \ref{9i3oerXfvdf93p04efeJ}:

\begin{lemma} \label {xjcp293iqwpd9icwso}
Let $\bk$ be a field of characteristic zero and $B$ an affine $\bk$-domain satisfying $\FML(B) = \bk$.
Let $\Delta$ be any subset of $\lnd(B)$ satisfying $K_\Delta = \bk$.
Then there exist a finite sequence $\Seul = (D_1, \dots, D_N)$  of elements of $\Delta$ and a nonempty Zariski-open subset $U$ of $\Spec B$ such that,
for each maximal ideal $\mgoth$ of $B$ belonging to $U$, the $\bk$-homomorphism $\Psi_\Seul^\mgoth : B \to \kappa(\mgoth)[X_1, \dots, X_N]$ is injective.
\end{lemma}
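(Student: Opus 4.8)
The plan is to reduce to the algebraically closed case, where Lemma~\ref{fh939fg2730fikqzzzzz} (i.e.\ Popov's results) is available, and then descend back to $\bk$ using the constructibility statement of Thm~\ref{cp0Q9vn23we9dfcwend0}. First I would record that $B$ is geometrically integral: by Cor.~\ref{9132g8rf7rhf} the hypothesis $\FML(B)=\bk$ forces $B$ to be geometrically unirational, so in particular $\ck\otimes_\bk B$ is a domain (where $\ck$ denotes the algebraic closure of $\bk$), which is equivalent to geometric integrality. Set $\bar B=\ck\otimes_\bk B$ and let $\bar\Delta=\setspec{\bar D}{D\in\Delta}\subseteq\lnd(\bar B)$ be the set of extensions furnished by Lemma~\ref{evdhd838eh65433ru0}. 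That lemma gives that $\bar B$ is an affine $\ck$-domain with $\dim\bar B=\dim B=:n$ and $K_{\bar\Delta}=\ck\otimes_\bk K_\Delta=\ck$, whence $\trdeg(\Frac\bar B:K_{\bar\Delta})=\trdeg_\ck(\Frac\bar B)=n$.

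Next I would apply Lemma~\ref{fh939fg2730fikqzzzzz} to $\ck$, $\bar B$, and $\bar\Delta$. Part (a) produces a finite sequence of elements of $\bar\Delta$; since each element of $\bar\Delta$ is the extension of a unique element of $\Delta$, this is the base change of a sequence $\Seul=(D_1,\dots,D_N)$ of elements of $\Delta$, and the associated map $\Psi_{\bar\Seul}\colon\bar B\to\bar B[X]$ is precisely $\ck\otimes_\bk\Psi_\Seul$ (compare the explicit formula \eqref{xncb2oi3qwdowes} from Notations~\ref{iefocXo23wiedn}). Part (b) yields a nonempty open $\bar U\subseteq\Spec\bar B$ on which every closed point $\bar x$ has orbit $G_{\bar\Delta}(\bar x)$ of dimension $n$. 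For such $\bar x$ one has $\kappa(\bar x)=\ck$, so $\hat\Psi_{\bar\Seul}^{\bar x}$ is the map $\Psi_{\bar\Seul}^{\bar x}\colon\bar B\to\ck[X]$; its image has Zariski closure $\overline{\Psi_{\bar\Seul}^*(\aff^N\times\{\bar x\})}=\overline{G_{\bar\Delta}(\bar x)}$ by part (a), and this is an irreducible closed subset of $\Spec\bar B$ of dimension $n=\dim\bar B$, hence all of $\Spec\bar B$. Therefore $\Psi_{\bar\Seul}^{\bar x}$ is injective, i.e.\ $\bar x\in\Xeul_\ck(\Psi_{\bar\Seul})$.

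The step I expect to require the most care is the descent of injectivity from $\ck$ to $\bk$ at the generic point. The closed points of $\bar U$ are dense in $\Spec\bar B$, so $\Xeul_\ck(\Psi_{\bar\Seul})$ is dense; being constructible by Thm~\ref{cp0Q9vn23we9dfcwend0}(a), it contains the generic point, i.e.\ $\hat\Psi_{\bar\Seul}^{0}\colon\Frac(\bar B)\otimes_\ck\bar B\to\Frac(\bar B)[X]$ is injective. To transfer this to $B$ I would use the identification $\Frac(\bar B)\otimes_\ck\bar B=\Frac(\bar B)\otimes_\bk B$ together with the commutative square whose top row is $\hat\Psi_\Seul^{0}\colon\Frac(B)\otimes_\bk B\to\Frac(B)[X]$, whose bottom row is $\hat\Psi_{\bar\Seul}^{0}$, and whose vertical maps are the inclusions induced by $\Frac(B)\hookrightarrow\Frac(\bar B)$; the left vertical stays injective after $-\otimes_\bk B$ because $B$ is flat over the field $\bk$. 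Chasing the square (the bottom and the left map are injective, so their composite, and hence $\hat\Psi_\Seul^{0}$, is injective) shows that the generic point $0$ of $\Spec B$ lies in $\Xeul_\bk(\Psi_\Seul)$.

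Finally, $\Xeul_\bk(\Psi_\Seul)$ is constructible by Thm~\ref{cp0Q9vn23we9dfcwend0}(a) and contains the generic point, so it contains a nonempty open $U\subseteq\Spec B$. For each maximal ideal $\mgoth\in U$, Thm~\ref{cp0Q9vn23we9dfcwend0}(b)—applicable since $B$ is geometrically integral—gives that $\mgoth\in\Xeul_\bk(\Psi_\Seul)$ is equivalent to $\Psi_\Seul^\mgoth$ being injective, which completes the argument. Thus the genuine content is imported from Lemma~\ref{fh939fg2730fikqzzzzz}, while the work to be done is organizing two applications of constructibility around the base-change diagram; the delicate point is ensuring the compatibility $\Psi_{\bar\Seul}=\ck\otimes_\bk\Psi_\Seul$ and the resulting descent of injectivity at the generic fibre.
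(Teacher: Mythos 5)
Your proposal is correct, and its first half (base change to $\ck$, Popov's results via Lemma \ref{fh939fg2730fikqzzzzz}, and the dominance argument showing $\Psi_{\bar\Seul}^{\bar x}$ is injective for closed points $\bar x$ in a dense open subset of $\Spec\bar B$) coincides with the paper's proof. Where you diverge is the descent from $\ck$ to $\bk$: the paper stays entirely at the level of maximal ideals, taking $U$ to be the image of $\bar U$ under the open morphism $\Spec\bar B\to\Spec B$, lifting each maximal $\mgoth\in U$ to a maximal $\bar\mgoth\in\bar U$ (using that $\bar B$ is integral over $B$), and reading off injectivity of $\Psi_\Seul^\mgoth$ from the commutative square $\Psi_{\bar\Seul}^{\bar\mgoth}\circ(B\to\bar B)=(\kappa(\mgoth)[X]\to\kappa(\bar\mgoth)[X])\circ\Psi_\Seul^\mgoth$. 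You instead route through the generic point, invoking the constructibility statement Thm \ref{cp0Q9vn23we9dfcwend0}\eqref{9hd8HGuKHytYf7624i-cons} twice (once over $\ck$ to land in $\Xeul_\ck(\Psi_{\bar\Seul})$ at the generic point, once over $\bk$ to spread back out) together with a flat-base-change diagram chase at the generic fibre. Both descents are sound; the paper's is more elementary and keeps this lemma independent of Section 1, whereas yours is heavier but delivers more — you establish along the way that $\hat\Psi_\Seul^\pgoth$ is injective for all $\pgoth$ in a nonempty open set, which is essentially part (b) of Thm \ref{9i3oerXfvdf93p04efeJ}, so the subsequent theorem's proof would collapse to a citation. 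The one point to state explicitly if you write this up is the compatibility $\Psi_{\bar\Seul}=\ck\otimes_\bk\Psi_\Seul$ (immediate from formula \eqref{xncb2oi3qwdowes} since $\bar D_i(\lambda\otimes b)=\lambda\otimes D_i(b)$), which you correctly flag as the hinge of the generic-fibre descent.
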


\begin{proof}
Let us first prove the Lemma under the additional assumption that $\bk$ is algebraically closed.
Consider any subset $\Delta$ of $\lnd(B)$ satisfying $K_\Delta = \bk$.
By Lemma \ref{fh939fg2730fikqzzzzz},
there exists a finite sequence $\Seul = (D_1, \dots, D_N)$ of elements of $\Delta$ such that the morphism
$\Psi_\Seul^* : \aff^N \times X \to X$ (where $X=\Spec B$) satisfies
$\Psi_\Seul^* ( \aff^N \times \{x\} ) = G_\Delta(x)$ for every closed point $x \in X$.
Choose such a sequence $\Seul$. Lemma \ref{fh939fg2730fikqzzzzz} also implies that
there exists a nonempty open subset $U$ of $\Spec B$ such that, for every closed point $x \in U$,
the dimension of $G_\Delta(x)$ is equal to the transcendence degree of $\Frac B$ over $K_\Delta = \bk$.
Choose such a $U$ and note that if $x$ is a closed point of $U$ then $G_\Delta(x)$ is dense in $\Spec B$.
This means that if $x$ is a closed point of $U$ then the composition
$$
\begin{array}{ccccc}
\aff^N & \to & \aff^N \times X & \xrightarrow{ \Psi_\Seul^* } & X \\
t & \mapsto & (t,x)
\end{array}
$$
is a dominant morphism.  Now this composition is precisely $(\Psi_\Seul^\mgoth)^*$, where $\mgoth = x$, 
so $\Psi_\Seul^\mgoth$ is injective (and this is true for every maximal ideal $\mgoth$ of $B$ such that $\mgoth \in U$).
This proves the Lemma when $\bk$ is algebraically closed.

Now drop the assumption that $\bk$ is algebraically closed.
Consider any subset $\Delta$ of  $\lnd(B)$ satisfying $K_\Delta = \bk$.
Let $\ck$ be the algebraic closure of $\bk$ and let $\bar B = \ck \otimes_\bk B$.
By Lemma \ref{evdhd838eh65433ru0}, $\bar B$ is an affine $\ck$-domain satisfying $\FML(\bar B) = \ck$.
Define $\bar\Delta \subseteq \lnd(\bar B)$ as in Lemma \ref{evdhd838eh65433ru0}, then (by Lemma \ref{evdhd838eh65433ru0})
$K_{\bar\Delta} = \ck \otimes_\bk K_\Delta = \ck \otimes_\bk \bk$, so $K_{\bar\Delta} = \ck$.
Since the Theorem is valid for $\bar B$ by the first part of the proof, 
there exist a finite sequence $\bar\Seul = (\bar D_1, \dots, \bar D_N)$  of elements of $\bar\Delta$ and a dense open subset $\bar U$ of $\Spec \bar B$ such that,
for each maximal ideal $\bar \mgoth$ of $\bar B$ belonging to $\bar U$,
$\Psi_{\bar\Seul}^{\bar\mgoth} : \bar B \to \kappa(\bar\mgoth)[X_1, \dots, X_N]$ is injective.
Let $\Seul = (D_1,\dots,D_N)$ be the sequence of elements of $\Delta$ such that $\bar D_i$ is the extension of $D_i$ for all $i$.
Let $U$ be the image of $\bar U$ by the open map $\Spec \bar B \to \Spec B$
(by \cite[Tag 037G]{stacks-project},
if $R$ and $S$ are algebras over a field $k$ then $\Spec(R \otimes_k S) \to \Spec R$ is an open morphism).
Then $U$ is a dense open subset of $\Spec B$ and we claim that $\Seul$ and $U$ have the desired property.
Indeed, let $\mgoth$ be any maximal ideal of $B$ satisfying $\mgoth \in U$.
Then there exists $\bar\mgoth \in \bar U$ satisfying $\bar\mgoth \cap B = \mgoth$,
and $\bar\mgoth$ is in fact a maximal ideal of $\bar B$ because $\bar B$ is integral over $B$.
It is clear from equations \eqref{xncb2oi3qwdowes} and \eqref{o3874t5ftc3945h7r8f} that the diagram 
$$
\xymatrix{
\bar B \ar[r]^-{\Psi_{\bar\Seul}} & \bar B[X_1,\dots,X_N] \ar[r]^-{\Phi_{\bar\mgoth}} &  \kappa(\bar\mgoth)[X_1, \dots, X_N] \\
B \ar[r]_-{\Psi_\Seul} \ar[u] & B[X_1,\dots,X_N] \ar[r]_-{\Phi_\mgoth} \ar[u] &  \kappa(\mgoth)[X_1, \dots, X_N] \ar[u]
}
$$
commutes.  Since $B \to \bar B$ and  $\Psi_{\bar\Seul}^{\bar\mgoth} = \Phi_{\bar\mgoth} \circ \Psi_{\bar\Seul}$  are injective,
it follows that $\Psi_{\Seul}^{\mgoth} = \Phi_{\mgoth}\circ\Psi_{\Seul}$  is injective.
This completes the proof of Lemma \ref{xjcp293iqwpd9icwso}.
\end{proof}

The following well-known fact is very useful: 

\begin{lemma} \label {0c9v902j3we0fcwpdjhf2873} 
Let $\bk$ be a field of characteristic zero and $B$ a $\bk$-domain.
The following are equivalent:
\begin{enumerata}

\item $\bk$ is algebraically closed in $\Frac(B)$
\item $K \otimes_\bk B$ is a domain for every extension field $K$ of $\bk$ (i.e., $B$ is geometrically integral).
\item $\ck \otimes_\bk B$ is a domain, where $\ck$ is the algebraic closure of $\bk$.

\end{enumerata}
\end{lemma}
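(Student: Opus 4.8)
The plan is to prove the cycle $(b)\Rightarrow(c)\Rightarrow(a)\Rightarrow(b)$, after a preliminary reduction to the case where $B$ is a field. For that reduction, note that since $K$ is a free, hence flat, $\bk$-module, $K\otimes_\bk B$ is a free $B$-module, so multiplication by any nonzero $s\in B$ is injective on it; writing $S=B\setminus\{0\}$, the localization map $K\otimes_\bk B\to S^{-1}(K\otimes_\bk B)=K\otimes_\bk\Frac(B)$ is therefore injective, and a ring that embeds in a localization of itself at a set of nonzerodivisors is a domain if and only if that localization is. Hence $K\otimes_\bk B$ is a domain if and only if $K\otimes_\bk\Frac(B)$ is, and since condition (a) only mentions $\Frac(B)$, I may replace $B$ by the field $F:=\Frac(B)$ throughout.

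The implication $(b)\Rightarrow(c)$ is just the special case $K=\ck$. For $(c)\Rightarrow(a)$ I argue by contraposition: if $\bk$ is not algebraically closed in $F$, pick $\alpha\in F$ algebraic over $\bk$ with minimal polynomial $g$ of degree $\ge 2$; since $\Char\bk=0$, $g$ is separable, so over $\ck$ it splits into distinct linear factors and the Chinese Remainder Theorem gives $\ck\otimes_\bk\bk(\alpha)\isom\ck[T]/(g)\isom\ck^{\deg g}$, which has zero divisors. As $\ck$ is flat over $\bk$, this ring embeds in $\ck\otimes_\bk F$, contradicting (c).

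The substance is $(a)\Rightarrow(b)$. I would first record the key building block: for a \emph{finite} extension $L/\bk$ with primitive element $\theta$ (available in characteristic $0$) and minimal polynomial $g$, one has $L\otimes_\bk F\isom F[T]/(g)$, and $g$ stays irreducible over $F$ — any monic factor of $g$ in $F[T]$ has coefficients that are symmetric functions of roots of $g$, hence algebraic over $\bk$ and lying in $F$, hence in $\bk$ by (a), so a proper factorization over $F$ would give one over $\bk$. Thus $L\otimes_\bk F$ is a field. Writing $\ck$ as the directed union of its finite subextensions $L/\bk$ and using that the transition maps $L\otimes_\bk F\hookrightarrow L'\otimes_\bk F$ are injective by flatness, $\ck\otimes_\bk F=\varinjlim_{L}(L\otimes_\bk F)$ is a directed union of domains, hence a domain; this already yields (c). For an arbitrary $K/\bk$, a hypothetical relation $xy=0$ with $x,y\neq 0$ involves only finitely many elements of $K$, so it suffices to treat finitely generated $K/\bk$; such a $K$ is a finite separable extension of a purely transcendental $\bk(t_1,\dots,t_r)$. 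One then passes through the purely transcendental stage — where $\bk(t_1,\dots,t_r)\otimes_\bk F$ is a localization of the polynomial domain $F[t_1,\dots,t_r]$, hence a domain with fraction field $F(t_1,\dots,t_r)$ — and finishes with the finite building block applied over the base $\bk(t_1,\dots,t_r)$, combined once more with the localization reduction of the first paragraph.

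The main obstacle is exactly the purely transcendental stage: to invoke the finite building block over $\bk(t_1,\dots,t_r)$ I need the hypothesis to persist, i.e. that $\bk(t_1,\dots,t_r)$ is algebraically closed in $F(t_1,\dots,t_r)$ whenever $\bk$ is algebraically closed in $F$. This stability under adjunction of transcendentals is the one genuinely nontrivial ingredient — it is the heart of the theory of regular and separable extensions and can be quoted from standard references such as Bourbaki's \emph{Algèbre} — while everything else is formal manipulation of tensor products, flatness, and localizations. In the already-closed case $\bk=\ck$, the only thing needed is this transcendental stability over an algebraically closed base, which recovers the familiar fact that in characteristic zero an integral affine algebra over an algebraically closed field is automatically geometrically integral.
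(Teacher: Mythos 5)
The paper states this lemma without any proof, labelling it a ``well-known fact,'' so there is no argument of the author's to compare yours against; I can only assess your proof on its own terms, and it is correct. Your route is the standard one: the localization trick reducing $B$ to $F=\Frac(B)$ is right (freeness of $K$ over $\bk$ makes $B\setminus\{0\}$ consist of nonzerodivisors in $K\otimes_\bk B$, and $K\otimes_\bk F$ is the corresponding localization), the contrapositive for (c)$\Rightarrow$(a) via $\ck\otimes_\bk\bk(\alpha)\isom\ck^{\deg g}$ is clean, and the finite building block (irreducibility of $g$ over $F$ because the coefficients of any monic factor are algebraic over $\bk$ and lie in $F$, hence in $\bk$ by (a)) together with the direct-limit argument correctly yields (a)$\Rightarrow$(c). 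For the full (a)$\Rightarrow$(b) you honestly isolate the one genuinely nontrivial input --- that $\bk(t_1,\dots,t_r)$ remains algebraically closed in $F(t_1,\dots,t_r)$ --- and this is indeed a standard, quotable fact from the theory of regular extensions (Bourbaki, Alg\`ebre V, or Lang, Ch.~VIII); in characteristic zero ``$\bk$ algebraically closed in $F$'' is equivalent to $F/\bk$ being regular, and regularity is preserved by purely transcendental base change. Since the author also treats the whole equivalence as citable folklore, relying on that one reference is entirely in keeping with the level of rigor of the surrounding text; if you wanted a self-contained write-up, that stability statement is the single item you would still need to prove.
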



See \ref{p0c9f2n3w9sdc0qpwo} for the notation $\Xeul_\bk(B)$.

\begin{theorem} \label {9i3oerXfvdf93p04efeJ}
Let $\bk$ be a field of characteristic zero and $B$ an affine $\bk$-domain. If $\FML(B) = \bk$ then the following hold.
\begin{enumerata}

\item The set $\Xeul_\bk(B)$ has nonempty interior, i.e., the condition $\kappa(\pgoth) \otimes_\bk B \subseteq \kappa(\pgoth)^{[ \dim B ]}$
is satisfied for all prime ideals $\pgoth$ in some nonempty open subset of $\Spec B$.

\item Given any subset $\Delta$ of $\lnd(B)$ satisfying $K_\Delta = \bk$,
there exist a finite sequence $\Seul = (D_1, \dots, D_N)$  of elements of $\Delta$ and a nonempty Zariski-open subset $U$ of $\Spec B$ such that,
for each $\pgoth \in U$,
the $\kappa(\pgoth)$-homomorphism $\hat\Psi_\Seul^\pgoth : \kappa(\pgoth) \otimes_\bk B \longrightarrow  \kappa(\pgoth)[X_1, \dots, X_N]$ is injective.

\end{enumerata}
\end{theorem}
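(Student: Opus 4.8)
The plan is to establish (b) first and then obtain (a) as an immediate specialization. Two preliminary observations make the constructibility machinery of Section~\ref{Section:embeddingsinpolynomialalgebras} available. Since $\FML(B)=\bk$ and $\FML(B)$ is the least element of $\Kscr(B)$, Lemma~\ref{0ckj238hqn239hnfpawrnHzb} shows that $\bk$ is algebraically closed in $\Frac B$, so by Lemma~\ref{0c9v902j3we0fcwpdjhf2873} the domain $B$ is geometrically integral. Moreover the choice $\Delta=\lnd(B)$ satisfies $K_\Delta=\FML(B)=\bk$, so a subset $\Delta$ as in (b) always exists.

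To prove (b), fix $\Delta\subseteq\lnd(B)$ with $K_\Delta=\bk$. First I would invoke Lemma~\ref{xjcp293iqwpd9icwso} to produce a finite sequence $\Seul=(D_1,\dots,D_N)$ of elements of $\Delta$ and a nonempty open subset $U_0$ of $\Spec B$ such that $\Psi_\Seul^\mgoth\colon B\to\kappa(\mgoth)[X]$ is injective for every maximal ideal $\mgoth\in U_0$. Viewing $\Psi_\Seul\colon B\to B[X]$ through the special case $R=B$ of Notations~\ref{90q3985yTghvj29hvnr}, I would then apply Theorem~\ref{cp0Q9vn23we9dfcwend0}: because $B$ is geometrically integral, part~\eqref{9hd8HGuKHytYf7624i-iff} translates the injectivity of $\Psi_\Seul^\mgoth$ into the membership $\mgoth\in\Xeul_\bk(\Psi_\Seul)$, valid for each maximal ideal $\mgoth$ of $U_0$.

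The heart of the argument --- and the step I expect to require the most care --- is passing from ``all closed points of a dense open set lie in $\Xeul_\bk(\Psi_\Seul)$'' to ``a full nonempty open set lies in $\Xeul_\bk(\Psi_\Seul)$''. Here I would use that the affine $\bk$-domain $B$ is a Jacobson ring, so the maximal ideals contained in $U_0$ are dense in $U_0$, and hence dense in the irreducible space $\Spec B$ since $U_0$ is itself dense there. Thus $\Xeul_\bk(\Psi_\Seul)$ is dense in $\Spec B$; and it is constructible by Theorem~\ref{cp0Q9vn23we9dfcwend0}\eqref{9hd8HGuKHytYf7624i-cons}. A dense constructible subset of an irreducible topological space contains a nonempty open subset, so there is a nonempty open $U\subseteq\Xeul_\bk(\Psi_\Seul)$. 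By definition of $\Xeul_\bk(\Psi_\Seul)$, this says precisely that $\hat\Psi_\Seul^\pgoth$ is injective for every $\pgoth\in U$, which is assertion (b).

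Finally, (a) follows by applying (b) with $\Delta=\lnd(B)$. For the resulting $\Seul$ and $U$, injectivity of $\hat\Psi_\Seul^\pgoth\colon\kappa(\pgoth)\otimes_\bk B\to\kappa(\pgoth)^{[N]}$ gives $\kappa(\pgoth)/\bk\in\PE_\bk(B)$, and Lemma~\ref{0si1heazd93g9w8las0}\,(c) upgrades this to $\kappa(\pgoth)\otimes_\bk B\subseteq\kappa(\pgoth)^{[\dim B]}$; hence $\pgoth\in\Xeul_\bk(B)$. Therefore $U\subseteq\Xeul_\bk(B)$, so $\Xeul_\bk(B)$ has nonempty interior. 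The only genuinely hard input is Lemma~\ref{xjcp293iqwpd9icwso} (which rests on Popov's results); everything after it is the constructibility-plus-density upgrade just described.
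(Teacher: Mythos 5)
Your proposal is correct and follows essentially the same route as the paper: Lemma \ref{xjcp293iqwpd9icwso} supplies the sequence $\Seul$ and the open set of maximal ideals, Theorem \ref{cp0Q9vn23we9dfcwend0}\eqref{9hd8HGuKHytYf7624i-iff} converts injectivity of $\Psi_\Seul^\mgoth$ into $\mgoth\in\Xeul_\bk(\Psi_\Seul)$, and constructibility from Theorem \ref{cp0Q9vn23we9dfcwend0}\eqref{9hd8HGuKHytYf7624i-cons} plus density yields the open set $U$. You merely make explicit two steps the paper leaves implicit (the Jacobson-ring density of closed points and the inclusion $\Xeul_\bk(\Psi_\Seul)\subseteq\Xeul_\bk(B)$), which is fine.
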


\begin{proof} 
The condition  $\FML(B) = \bk$ implies that $\bk$ is algebraically closed in $\Frac B$ by Lemma \ref{0ckj238hqn239hnfpawrnHzb},
so $B$ is geometrically integral by Lemma \ref{0c9v902j3we0fcwpdjhf2873}.
We first prove (b).
Let $\Delta$ be a subset of $\lnd(B)$ satisfying $K_\Delta = \bk$.
By Lemma \ref{xjcp293iqwpd9icwso},
there exist a finite sequence $\Seul = (D_1, \dots, D_N)$  of elements of $\Delta$ and a nonempty Zariski-open subset $U$ of $\Spec B$ such that,
for each maximal ideal $\mgoth$ of $B$ belonging to $U$, the $\bk$-homomorphism $\Psi_\Seul^\mgoth : B \to \kappa(\mgoth)[X_1, \dots, X_N]$ is injective.
Since $B$ is geometrically integral,
part \eqref{9hd8HGuKHytYf7624i-iff} of Thm \ref{cp0Q9vn23we9dfcwend0} implies that
$\mgoth \in \Xeul_\bk( \Psi_\Seul )$ for each maximal ideal $\mgoth$ of $B$ belonging to $U$,
so in particular $\Xeul_\bk( \Psi_\Seul )$ is dense in $\Spec B$.
Since $\Xeul_\bk( \Psi_\Seul )$ is a constructible subset of $\Spec B$ by part \eqref{9hd8HGuKHytYf7624i-cons} of Thm \ref{cp0Q9vn23we9dfcwend0},
some\footnote{Actually, one can show that $U$ itself is included in $\Xeul_\bk( \Psi_\Seul )$.}
nonempty open subset of $\Spec B$ is included in $\Xeul_\bk( \Psi_\Seul )$, so (b) follows.
Since  $\Xeul_\bk( \Psi_\Seul ) \subseteq \Xeul_\bk(B)$, (a) follows.
\end{proof}

\begin{bigremarks} \label {p0c9v023we9f0d}
Given a field $\bk$ and an affine $\bk$-domain $B$, consider the conditions
\begin{enumerate}

\item[(i)] $\Xeul_\bk(B)$ has nonempty interior;

\item[(ii)] $\ck \otimes_\bk B \subseteq \ck^{[ \dim B ]}$, where $\ck$ is the algebraic closure of $\bk$;

\item[(iii)] $B$ is geometrically unirational.

\end{enumerate}
Then (i) is  strictly stronger than (ii) by Cor.\ \ref{kxjcop923wpeds} and Ex.\ \ref{pc09vnE230ed9vCf}, 
and it is clear that (ii) is strictly stronger than (iii).
This justifies our claim that Thm \ref{9i3oerXfvdf93p04efeJ} goes beyond Cor.\ \ref{9132g8rf7rhf}.
\end{bigremarks}

\begin{corollary} \label {0c9f982u3e9fjpwe}
Let $\bk$ be a field of characteristic zero and let $B$ be an affine $\bk$-domain satisfying $\FML(B) = \bk$.
Then the following are equivalent:
\begin{enumerata}
\item  $B \subseteq \bk^{[ \dim B ]}$ 
\item  $\bk$-rational points are dense in $\Spec B$
\item  the extension $\Frac(B) / \bk$ has the density property
\item  $B$ is unirational over $\bk$.
\end{enumerata}
In particular, if $\bk$ is algebraically closed or $\dim B \le 2$ then $B \subseteq \bk^{[ \dim B ]}$.
\end{corollary}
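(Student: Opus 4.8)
The plan is to obtain the equivalence of (a)--(d) formally and then to settle the two special cases by hand. Since $\FML(B)=\bk$, Thm \ref{9i3oerXfvdf93p04efeJ}(a) gives that $\Xeul_\bk(B)$ has nonempty interior, which is exactly the hypothesis of Rem \ref{d023kHhvjhhJHkru83ee}; so (a)$\Leftrightarrow$(b)$\Leftrightarrow$(c)$\Leftrightarrow$(d) is immediate. All that remains is to verify that one of these conditions holds when $\bk$ is algebraically closed or when $\dim B\le 2$.

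For $\bk$ algebraically closed I would check (b) directly. As $B$ is a finitely generated $\bk$-algebra, its closed points are dense in $\Spec B$, and by the Nullstellensatz each such point has residue field finite over the algebraically closed field $\bk$, hence equal to $\bk$. Thus every closed point is $\bk$-rational, so (b) holds and (a) follows. (Alternatively one may quote the Unirationality Theorem to get (d).)

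For $\dim B\le 2$ I would argue dimension by dimension, targeting (a) or (d). If $\dim B=0$ then $B$ is a field algebraic over $\bk$; a field admits no nonzero locally nilpotent derivation, so $\FML(B)=\Frac(B)=B$ and therefore $B=\bk$, giving (a) trivially. If $\dim B=1$, I would pick $D\in\lnd(B)\setminus\{0\}$ (one exists, since otherwise $\FML(B)=\Frac B\ne\bk$), put $A=\ker D$ and $K=\Frac A$. By \ref{pc9293ed0wdjo03}(ii) we have $B\subseteq K[s]=K^{[1]}$, $\Frac B=K^{(1)}$, and $K$ is algebraically closed in $\Frac B$; comparing transcendence degrees forces $\trdeg_\bk K=0$, so every element of $K$ is algebraic over $\bk$. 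Because $\FML(B)=\bk$ is algebraically closed in $\Frac B$ by Lemma \ref{0ckj238hqn239hnfpawrnHzb}, this yields $K=\bk$ and hence $B\subseteq\bk^{[1]}$, i.e.\ (a).

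The genuinely hard case is $\dim B=2$, which I expect to be the main obstacle. By Cor \ref{9132g8rf7rhf}, $B$ is geometrically unirational, so $\ck\otimes_\bk B$ is unirational over $\ck$; being a surface in characteristic zero it is in fact \emph{geometrically rational} by Castelnuovo's criterion. Choosing $D\ne 0$ in $\lnd(B)$ and setting $A=\ker D$, part \ref{pc9293ed0wdjo03}(ii) gives $B_a=A_a^{[1]}$ for some $a$, so $\Spec B_a\cong\aff^1\times\Spec A_a$ realizes $\Spec B$, up to a proper closed set, as an $\aff^1$-bundle over the curve $\Spec A$; moreover the base of such a fibration on a geometrically rational surface is again geometrically rational, so $K=\Frac A$ is a genus-zero function field over $\bk$. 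By the equivalences already established, it now suffices to produce a single $\bk$-rational point on $\Spec A$: a genus-zero curve with a $\bk$-point is $\bk$-rational, hence has dense $\bk$-points, and then $\Spec B_a\cong\aff^1\times\Spec A_a$ (with $\bk$ infinite) has dense $\bk$-points, so (b) holds. The delicate step is thus the existence of one $\bk$-rational point on the genus-zero base, and this is where I would concentrate the effort, exploiting the abundance of $\mathbb{G}_a$-actions forced by $\FML(B)=\bk$ (for instance a second, transversal $\aff^1$-fibration arising from some $D'$ with $\Frac(\ker D')\ne K$, whose general fibre should map dominantly to the first base curve); checking that two such fibrations are genuinely distinct and that the induced dominance descends to a $\bk$-point is the crux I expect to require the most care.
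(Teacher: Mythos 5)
Your treatment of the equivalence of (a)--(d) and of the algebraically closed case matches the paper exactly: Thm \ref{9i3oerXfvdf93p04efeJ}(a) supplies the hypothesis of Rem.\ \ref{d023kHhvjhhJHkru83ee}, and over an algebraically closed field condition (b) is automatic. Your dimension $0$ and dimension $1$ arguments are also correct (in dimension $1$ the key points are $\trdeg_\bk K=0$ and the fact that $\bk=\FML(B)$ is algebraically closed in $\Frac B$ by Lemma \ref{0ckj238hqn239hnfpawrnHzb}, both of which you use properly).

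The problem is the case $\dim B=2$, where your argument is not a proof but a plan with the decisive step left open. You correctly reduce to producing a single $\bk$-rational point on the genus-zero base curve $\Spec A$ (geometric rationality via Castelnuovo does not suffice: a conic without $\bk$-points is geometrically rational but not rational, so some input beyond Cor.\ \ref{9132g8rf7rhf} is genuinely needed), and you explicitly defer the construction of that point to a future argument involving a second, transversal $\aff^1$-fibration. That deferred step is the entire content of the claim in dimension $2$, so as written the proof is incomplete. The paper sidesteps this by citing the known theorem that a $2$-dimensional affine $\bk$-domain with $\FML(B)=\bk$ is rational over $\bk$ (\cite[Lemma 5.3.8]{Kol:thesis} or \cite[Cor.\ 2.6]{Daigle:StructureRings}), which gives (d) directly. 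If you want a self-contained argument you would have to carry out the two-fibration analysis in detail (roughly: a second locally nilpotent derivation $D'$ with $\Frac(\ker D')\neq K$ restricts, on a general fibre of the first fibration, to a nonconstant map to the base, and a general fibre is $\aff^1_{\kappa}$ for a suitable residue field, forcing points of the right degree on the base); none of that is in your text, so there is a genuine gap here rather than merely a different route.
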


\begin{proof}
Conditions (a--d) are equivalent by Rem.\ \ref{d023kHhvjhhJHkru83ee}.
If $\bk$ is algebraically closed then (b) holds, so $B \subseteq \kk{\dim B}$.
If $\dim B \le 2$ then it is well known that $B$ is rational over $\bk$
(see for instance \cite[Lemma 5.3.8]{Kol:thesis} or \cite[Cor.\ 2.6]{Daigle:StructureRings}),
 so (d) holds and hence $B \subseteq \kk{\dim B}$.
\end{proof}

\begin{question} \label {ifhbcoser328}
Let $\bk$ be a field of characteristic zero and $B$ an affine $\bk$-domain.
When $\bk$ is not algebraically closed, does the condition $\FML(B)=\bk$ imply that $B$ is unirational over $\bk$?
(Probably not, but an example would be welcome.)
\end{question}

\bibliographystyle{alpha}
\newcommand{\etalchar}[1]{$^{#1}$}

\end{document}